\documentclass [12pt]{article}
\usepackage{amsmath,amssymb}
\usepackage{amssymb,verbatim}

\usepackage{blkarray}
\usepackage{amsmath}
\usepackage{tikz-cd} 

\usepackage{faktor}
\usepackage{xfrac}
\usepackage{faktor} 
\usepackage{amsmath} 
\usepackage{amssymb}
\usepackage{mathtools}

\usepackage[utf8]{inputenc}
\usepackage{graphicx}
\usepackage{graphicx,amssymb,amstext,amsmath}
\usepackage{tikz}
\usepackage{amsmath}
\usepackage{amssymb}
\usepackage{amsthm}
\usepackage{hebcal}
\usetikzlibrary{arrows, decorations.markings}
\usepackage{blindtext}
\usepackage{amsthm}
\usepackage[T1]{fontenc}
\usepackage[utf8]{inputenc}
\usepackage{blindtext}
\usepackage[T1]{fontenc}
\usepackage[utf8]{inputenc}
\newcommand{\interior}[1]{%
  {\kern0pt#1}^{\mathrm{o}}%
}
\usepackage{float}
\usepackage{enumitem}
\usepackage{multirow}
\usepackage{array}
\newcolumntype{L}[1]{>{\raggedright\let\newline\\\arraybackslash\hspace{0pt}}m{#1}}
\newcolumntype{C}[1]{>{\centering\let\newline\\\arraybackslash\hspace{0pt}}m{#1}}
\newcolumntype{R}[1]{>{\raggedleft\let\newline\\\arraybackslash\hspace{0pt}}m{#1}}
\usepackage{tikz}
\usetikzlibrary{positioning, arrows.meta}

\usepackage{cite}
\newtheorem{thm}{Theorem}[section]
\newtheorem*{thm*}{Theorem}
\newtheorem{lemma}[thm]{Lemma}
\newtheorem{prop}[thm]{Proposition}
\newtheorem{cor}[thm]{Corollary}
\newtheorem{definition}[thm]{Definition}

\newtheorem{remark}[thm]{Remark}

\newtheorem{question
}{Question}

\def\0{{\bf 0}}

\def\Z{{\bf Z}}

\def\vN{\mathbb{N}}
\def\Z{\mathbb{Z}}

\def\vR{\mathbb{R}}

\def\OO{\mathcal{O}}

\def\vE{\mathbb{E}}

\def\p{\mathfrak{p}}
\def\vol{{\rm vol}}

\def\vol{{\rm vol}}
\makeatletter
\def\keywords{\xdef\@thefnmark{}\@footnotetext}
\title{Simultaneous visibility in the algebraic lattice}
\date{}
\author{
Rishi Kumar\footnote{Department of Mathematics, Tel Aviv University, Israel. E-mail: rkumar@tauex.tau.ac.il}
\and
Wataru Takeda\footnote{Department of Mathematics, Toho University, Japan. E-mail: wataru.takeda@sci.toho-u.ac.jp}}
\begin{document}
\maketitle
\keywords{2020 \bf{Mathematics Subject Classification:} Primary 11P11, 11R45, Secondary 11P21, 11R04.}
\keywords{\bf{Key words and phrases. Number fields, algebraic integers, natural density, generalized cut-and-project sets}}
\begin{abstract}
Let $K$ be a number field with ring of integers $\OO$. Two lattice points ${\bf x, y}\in \OO^m$ with $m\geq 2$ are said to be visible from one another if $\gcd((x_i-y_i),\ldots, (x_m-y_m))=\OO$, where $(x_i-y_i)$ is the ideal generated by $x_i-y_i$. Let $S\subset \OO^m$ be a finite set. For $K=\mathbb{Q}$, the asymptotic density of the set of lattice points, visible from all points of $S$, was studied by several authors. For general number fields $K$, however, the asymptotic density has been studied only in the special case $S=\{(0,\ldots,0)\}$. Our main result establishes the corresponding density formula for a number field $K$ whose ring of integers $\OO$ is a principal ideal domain, for all finite sets $S$ with $|S|\geq 2$.
\end{abstract}

\section{Introduction and statement of results}
The distribution of lattice points is a classical subject with a rich history. Building on this foundation, this work investigates the asymptotic density of subsets within $\OO^m$ characterized by simultaneous visibility conditions, where $\OO$ represents the ring of integers of a number field $K$.
We begin by stating our main result and then review the relevant background and previous work.

\subsection{Main results}
We say two $m$-tuple $\textbf{a}=(a_1,\ldots, a_m), \textbf{b}=(b_1,\ldots, b_m)\in \OO^m$ are mutual visible from each other, if $\gcd(\textbf{a}-\textbf{b})=\gcd((a_1-b_1), \ldots,(a_m- b_m))= \OO$, where $(a_i-b_i)$ is the ideal generated by $a_i-b_i$.

For a finite set $S\subseteq \OO^m$ we define
\[
V(S)= \{{\bf z}\in \OO^m : \gcd({\bf z-x})= \OO \text{ for all } {\bf x}\in S\}.
\]
Thus $V(S)$ is the set of points that are simultaneously visible from all elements of $S$.

Let $\mathcal{P}_{\OO}$ be the set of prime ideals of $\OO$. 
For each $\p \in \mathcal{P}_\OO$, denote by
$$\pi_{\p}: \OO^m\to (\OO/\p)^m$$
the natural projection. Throughout the paper, we denote $|A|$, to be the cardinality of the set $A$.
The following theorem determines the density (see Section \ref{Section: visibility in number fields}) of $V(S)$.
\begin{thm*}[Theorem \ref{thm: main theorem statments}]
Let $S\subseteq \OO^m$ be a finite set. Then
\[
D(V(S))= \prod_{\p\subset \OO,\, {\rm prime}}
\left(1-\frac{|\pi_{\p}(S)|}{N(\p)^m}\right).
\]
\end{thm*}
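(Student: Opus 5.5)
The plan is a local-to-global sieve: first compute the density of the truncated version of $V(S)$ involving finitely many primes via the Chinese Remainder Theorem, then pass to the limit with a uniform tail estimate. Density is taken as in Section \ref{Section: visibility in number fields}: identify $\OO^m$ with a lattice in $(\mathbb{R}^{n})^m$, $n=[K:\mathbb{Q}]$, via the Minkowski embedding, count points in dilated boxes or balls $tB$, divide by the volume, and let $t\to\infty$.

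The first step is the local condition. For ${\bf z}\in\OO^m$, we have $\gcd({\bf z}-{\bf x})=\OO$ for all ${\bf x}\in S$ exactly when no prime $\p$ contains every coordinate of some ${\bf z}-{\bf x}$. Equivalently, $\pi_\p({\bf z})\notin\pi_\p(S)$ for every prime $\p$. For a finite set of primes $P$, set
\[
V_P(S)=\{{\bf z}:\pi_\p({\bf z})\notin\pi_\p(S)\ \forall \p\in P\}.
\]
This set is a union of cosets of the full-rank sublattice $(\prod_{\p\in P}\p)^m$. By the Chinese Remainder Theorem, the number of admissible residue classes modulo $\prod_{\p\in P}\p$ is
\[
\prod_{\p\in P}\bigl(N(\p)^m-|\pi_\p(S)|\bigr).
\]
Since a union of cosets of a full lattice has density equal to (number of cosets)/(index), we get
\[
D(V_P(S))=\prod_{\p\in P}\Bigl(1-\frac{|\pi_\p(S)|}{N(\p)^m}\Bigr).
\]
The product over all primes converges absolutely because $|\pi_\p(S)|\le|S|$ and $m\ge 2$, so $\sum_\p N(\p)^{-m}<\infty$. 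Taking $P=\{\p: N(\p)\le y\}$ and using $V(S)\subseteq V_P(S)$ immediately gives the upper bound $\limsup\le\prod_{\p}(\cdots)$ after letting $y\to\infty$.

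The main obstacle is the lower bound. For this I need to show
\[
\limsup_{t\to\infty}\frac{|(V_P(S)\setminus V(S))\cap tB|}{\vol(tB)}\to 0 \quad\text{as } y\to\infty .
\]
A point in the difference satisfies ${\bf z}-{\bf x}\in\p^m$ for some ${\bf x}\in S$ and some $\p$ with $N(\p)>y$. I bound this by
\[
\sum_{{\bf x}\in S}\ \sum_{N(\p)>y}\bigl|({\bf x}+\p^m)\cap tB\bigr|.
\]
For a fixed prime, a lattice-point count in the translated lattice $\p^m$, whose covolume is $N(\p)^m\,\mathrm{covol}(\OO)^m$, gives
\[
|({\bf x}+\p^m)\cap tB|\ll \frac{t^{nm}}{N(\p)^m}+(\text{boundary error}).
\]
The difficulty is uniformity in $\p$, since primes with huge norm have short vectors that distort the standard error term. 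I would handle this by splitting according to the size of $N(\p)$. For $N(\p)\le t^{c}$ with small $c$, I use the Lipschitz-principle / Davenport bound. The error there is controlled by successive minima of $\p$, and these are at least $\gg 1$ in each embedding because $\p\subseteq\OO$. It is at most $\ll t^{nm-1}\cdot(\text{stuff})$, and this sums to $o(t^{nm})$.

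For large $N(\p)$ a different argument is needed. If ${\bf z}\neq{\bf x}$ and ${\bf z}-{\bf x}\in\p^m$, then some coordinate $w=z_i-x_i\neq0$ lies in $\p$ with $|N(w)|\ll t^n$. Hence $N(\p)\le |N(w)|\ll t^n$, so only primes with $N(\p)\ll t^n$ contribute. The number of pairs $(w,\p)$ with $\p\mid w$ is bounded by $\sum_{w}\omega((w))\ll t^n\log t$. The other $m-1$ coordinates each lie in a fixed residue class mod $\p$. For $N(\p)>t^{c}$, each such class contains $\ll t^n/N(\p)+(\text{number of points on a lower-dimensional slice})$ points, and this is where $m\ge2$ is used. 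I expect this to give a total of $o(t^{nm})$.

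In the PID case the paper assumes, one can write $\p=(\pi)$ and multiply by units to normalize generators. This should make the counting of $\p^m=\pi\OO^m$ cosets cleaner: it is a dilate of $\OO^m$ by the embedding of $\pi$, which I would use to get uniform bounds. Combining the two estimates, the liminf is at least $\prod_{N(\p)\le y}(\cdots)-\epsilon(y)$ with $\epsilon(y)\to0$, and this yields the formula.
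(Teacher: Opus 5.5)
Your proof is correct, and its skeleton is the paper's. The paper's Lemma~\ref{lem: for density upto finite primes} presents $V(S)_{\vE_t}$ as a cut-and-project set, but the internal group $\prod_{\p\in\vE_t}(\OO/\p)^m$ is finite, so this is exactly your CRT/periodicity computation. The upper bound, and the reduction of the lower bound to covering $V_P(S)\setminus V(S)$ by cosets $\mathbf{s}+\p^m$, are also the same. The genuine difference is how you make the tail uniform in $\p$.

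The paper uses a single counting lemma for ideal lattices (Lemma~\ref{lemma on points Ideal in a ball}). Its input is that ideals are uniformly well rounded: a nonzero $w\in\p$ has $|N(w)|\ge N(\p)$, so, with Minkowski's second theorem, all successive minima of $\p$ are $\asymp N(\p)^{1/n}$. This gives error $\ll (R/N(\p)^{1/n})^{nm-1}$, hence a bound $\ll \vol(B[R,\vE])^m/N(\p)^m$ for every coset, and domination by $\sum_\p N(\p)^{-m}$ finishes without any splitting. In particular, the premise of your worry, that primes of large norm have short vectors, is false.

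You instead use only $\lambda_1(\p)\ge\lambda_1(\OO)$. This suffices for $y<N(\p)\le t^c$ with $c<1$, since there are only $O(t^c)$ such primes, each with error $O(t^{nm-1})$. For $N(\p)>t^c$ you add a divisor-counting argument that the paper does not use. That step does close, settling your ``I expect'':
\begin{itemize}
\item a nonzero $w$ with $|N(w)|\ll t^n$ has $O(1)$ prime divisors of norm $>t^c$, so there are $\ll t^n$ pairs $(w,\p)$;
\item each of the other $m-1$ coordinates lies in a coset of $\p$ meeting the box in $\ll t^n/N(\p)+t^{n-1}\ll t^{n-\min(c,1)}$ points;
\item the total is therefore $\ll t^{nm-\min(c,1)(m-1)}=o(t^{nm})$, because $m\ge2$.
\end{itemize}

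Your route needs less geometry of numbers. It also directly covers the primes with $R^n<N(\p)\le CR^n$, which the paper's lemma (stated only for $R>N(\p)^{1/n}$) leaves to a tacit trivial bound. The paper's route gives a cleaner per-prime estimate that is better suited to error terms.

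Two small points. First, normalize by $\mathrm{covol}(\OO)^m$ (or count in $\vE$-coordinates, as the paper does) so that your densities are relative to $\OO^m$. Second, the PID aside is unnecessary: neither argument uses it, and multiplication by $\pi$ is anisotropic unless the generator is balanced by units.
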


This result generalizes the classical density formula for visible lattice points in $\Z^m$ and extends previous work on visibility problems for algebraic integers as we will explain in the following subsections.

\subsection{Visibility from the origin in $\mathbb{Z}^m$}
We begin by recalling the definition of visibility from the origin in the $m$-dimensional integer lattice $\mathbb{Z}^m$.

Let $m\geq 2$. Two distinct points $\textbf{x}$ = $(x_1,\ldots ,x_m)$ and $\textbf{y}$ = $(y_1, \ldots ,y_m)$ in $\Z^m$ are said to be \textit{mutually visible} if no other lattice point lies on the line segment joining them. Equivalently, 
$$\gcd(x_1- y_1, \ldots, x_m -y_m) =1.$$
Let $V_m$ denote the set of lattice points in $\mathbb{Z}^m$ that are visible from the origin. The asymptotic density of a set $A\subseteq \mathbb{Z}^m$, measured along the boxes $[-L,L]^m$, is defined by
$$D(A) = \lim_{L\to \infty} \frac{\left|A\cap [-L,L]^m\right|}{\vol([-L,L])^m},$$
(provided the limit exists).
In 1849, Dirichlet proved that $D(V_2)= \frac{1}{\zeta(2)}$ and established an error bound:
\begin{equation}\label{eq. error term for m equal 2}
\frac{|V_2\cap[-L,L]^2 |}{\vol([-L,L])^2}= \frac{1}{\zeta(2)} + O\left(\frac{L^{\delta}}{L^2}\right),\qquad 1<\delta<2.    
\end{equation}
The error term in \eqref{eq. error term for m equal 2} has been improved by F. Mertens \cite{MERF} in 1874 and later by A. Walfisz \cite{WALFISZ} in 1959. The current best known bound is $$ O\left(\frac{(\log L)^{2/3}(\log \log L)^{1/3}}{L}\right)$$ established by Liu \cite{LIU} in 2016.

In 1900, Lehmer~\cite{DNL} extended Dirichlet's result to any dimension $m\geq 3$, and proved that 
$$D(V_m)= \frac{1}{\zeta(m)}.$$ 
Nymann \cite{Nym} in 1972 established an error bound
\begin{equation}\label{eq: error term for Vm with m greater than 2}
\frac{| V_m \cap [-L,L]^m|}{\vol([-L,L])^m}= \frac{1}{\zeta(m)} + O\left(\frac{1}{L}\right).    
\end{equation} 
The error bound in \eqref{eq: error term for Vm with m greater than 2} is the sharp bound (see \cite{TAKEDA, Berend1}). Baake, Moody, and Pleasants \cite{Moody} studied the density of the set of visible points in the setting of more general lattices and for different choices of averaging sets, including Euclidean balls  (see also the appendix of \cite{Moody} for a broader discussion of more general averaging sets). They proved in \cite[Proposition 6]{Moody} that for large $R$
\begin{equation}\label{eq. density along balls for Vk}
\begin{split}
\frac{|V_m\cap B_R|}{\vol(B_R)}&= \frac{1}{\zeta(m)} + \begin{cases}
    O\left(\frac{\log R}{R}\right),&\qquad m=2,\\
    O\left(\frac{1}{R}\right),&\qquad m\geq 3,
\end{cases}   
\end{split}    
\end{equation}
where $B_R\subset \mathbb{R}^m$ is ball of radius $R$ center at the origin.
In contrast, to the confirmed sharpness of the error bound for averaging over cubes $[-L, L]^m$ in equation \eqref{eq: error term for Vm with m greater than 2}, it is currently unknown whether the bound for $m \ge 3$ in equation \eqref{eq. density along balls for Vk} is sharp.

Recent work in \cite{ACZ} has explored visibility phenomena in higher dimensions and shown that almost all self-visible triangles with vertices in $[0,N]^d$ are nearly equilateral, with side lengths close to $\frac{N\sqrt{d}}{\sqrt{6}}$. 
The orchard visibility problem is known as a similar problem. This problem considers lattice points as objects of positive size, and two points are considered visible if the line of sight between them is not obstructed, reflecting the intuitive picture of trees in an orchard (see \cite{Kru}).

\subsection{Simultaneous visibility in $\mathbb{Z}^m$}
This subsection recalls the definition of simultaneous visibility in $\mathbb{Z}^m$ from a finite set of integer points.

A set $S$ is \textit{admissible} if any two distinct elements of $S$ are mutually visible. It can easily be seen that if $S$ is admissible, then $|S|\leq 2^m$. In the case of admissible set $S$, Rearick~\cite{ReaM} showed that
\begin{equation}\label{admi}
\begin{split}  
\frac{|V(S)\cap [-L,L]^m|}{\vol([-L,L])^m}&= \prod_{p\in \mathcal{P}}\left(1- \frac{|S|}{p^m}\right) + \begin{cases} 
 O\left(\frac{1}{L}\right), & \qquad   |S|< m-1, \\
O\left(L^{- \frac{m-1}{|S|}+\varepsilon}\right), ~\forall~\varepsilon > 0, & \qquad |S|\geq m-1,
\end{cases}
\end{split}
\end{equation}
where $\mathcal{P}$ is the set of all primes. Liu, Lu, and Meng \cite{LU} investigated simultaneous visibility in the setting of curves. When restricted to straight-line visibility, their results yield improvements to Rearick’s error bound in certain cases.

Rumsey~\cite{Rum} considered the more general situation in which $S$ is an arbitrary subset of $\Z^m$. For each prime $p \in \mathcal{P}$, let
$$\pi_p: \Z^m \to (\Z/ p\Z)^m$$ 
denote the natural projection and define
\begin{equation}\label{S(p)}
s(p) = |\pi_p(S)|, \qquad p\in \mathcal{P}.
\end{equation}
When $S$ is finite set, Rumsey~\cite{Rum} proved that the density (along cubes $[-L,L]^m$) of points in $\mathbb{Z}^k$ simultaneous visible from $S$ is given by
\begin{equation}\label{s(p)}
D(V(S)) = \prod_{p\in \mathcal{P}}\left(1 - \frac{s(p)}{p^m}\right).
\end{equation}
Moreover, he extended this formula to certain infinite subsets $S$, satisfying appropriate conditions. He remarked that the density formula also holds when averaging over growing boxes $[-L_1, L_1]\times \cdots \times [-L_m, L_m]$, as $\min\{L_1,\ldots, L_m\}\to \infty$.

Rumsey's approach of simultaneous visibility of an arbitrary finite set $S \subset \Z^m$ has been further investigated in any large box located anywhere (see \cite[Theorem 1.1]{Berend1}). 
In particular, when boxes are $[-L,L]^m$, it has been shown \cite[Theorem 1.1]{Berend1} that as $L\to \infty$,
\begin{equation}\label{equation on upper bound in main theorem}
\frac{|V(S)\cap [1,L]^m|}{\vol([-L,L])^m} \leq \prod_{p \in \mathcal{P}}\left(1 - \frac{s(p)}{p^m}\right) + \begin{cases} 
 O\left(L^{-2/3+\varepsilon}\right),\forall \varepsilon>0, & \qquad m = 2,\\
     O\left(\frac{1}{L}\right), &\qquad    m\geq 3.\\
    \end{cases}
 \end{equation}
It has been remarked in \cite{Berend1} that the inequality in \eqref{equation on upper bound in main theorem} can be reversed, i.e., 

\begin{equation}\label{equation on equality in main theorem}
\frac{|V(S)\cap [1,L]^m|}{\vol([-L,L])^m} = \prod_{p \in \mathcal{P}}\left(1 - \frac{s(p)}{p^m}\right) + \begin{cases} 
 O\left(L^{-2/3+\varepsilon}\right),\forall \varepsilon>0, & \qquad m = 2,\\
     O\left(\frac{1}{L}\right), &\qquad    m\geq 3.\\
    \end{cases}
 \end{equation}

\begin{prop}
Let $S$ be any finite set, and let $V(S)$ be the set of lattice points in $\mathbb{Z}^m$ simultaneously visible from $S$. Then the density of $V(S)$, when we take averaging  along balls, is given by
$$\lim_{R\to \infty}\frac{|V(S)\cap B_R |}{\vol(B_R)}= \prod_{p\in \mathcal{P}}\left(1 - \frac{s(p)}{p^m}\right),$$
where $B_R$ is the ball of radius $R$ with center the origin.
\end{prop}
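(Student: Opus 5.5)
The plan is a sieve argument: truncate the set of primes, handle the resulting finitely many congruence conditions exactly by equidistribution of lattice points in balls, and control the tail primes by a uniform bound. Fix a parameter $P$. Let $V_P(S)$ be the set of $\mathbf z\in\Z^m$ such that for every prime $p\le P$ we have $\pi_p(\mathbf z)\notin\pi_p(S)$. Then $V(S)\subseteq V_P(S)$. Moreover, $V_P(S)\setminus V(S)$ consists of points $\mathbf z$ with $\pi_p(\mathbf z)\in\pi_p(S)$ for some prime $p>P$, that is, $\mathbf z\equiv\mathbf x \pmod p$ for some $\mathbf x\in S$ and $p>P$. It also contains finitely many exceptional points, namely $\mathbf z\in S$ itself, where every coordinate difference is $0$; these do not affect the density.

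\textbf{Main term.} The set $V_P(S)$ is periodic modulo $M=\prod_{p\le P}p$. By the Chinese remainder theorem it is a union of $\prod_{p\le P}(p^m-s(p))$ residue classes modulo $M$. For any fixed residue class $\mathbf a+M\Z^m$, the standard lattice-point count in a ball gives
\[
|(\mathbf a+M\Z^m)\cap B_R| = \vol(B_R)/M^m + O(R^{m-1}).
\]
This follows because $B_R$ is convex, so the boundary layer of fundamental cells has volume $O(R^{m-1})$. Summing over the finitely many classes yields
\[
\lim_{R\to\infty}\frac{|V_P(S)\cap B_R|}{\vol(B_R)}=\prod_{p\le P}\Bigl(1-\frac{s(p)}{p^m}\Bigr).
\]

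\textbf{Tail estimate.} This is the main obstacle. We need a bound on $|(V_P(S)\setminus V(S))\cap B_R|$ that is uniform in $R$ and tends to $0$ relative to $\vol(B_R)$ as $P\to\infty$. Take $\mathbf x\in S$ and a prime $p>P$ with $p\mid(\mathbf z-\mathbf x)$ and $\mathbf z\ne\mathbf x$. Since $\mathbf z\in B_R$ and $S$ is finite, $\mathbf w=\mathbf z-\mathbf x$ lies in $B_{R+c}$ for a constant $c=c(S)$. Because $\mathbf w\ne\mathbf 0$ is divisible by $p$, we need $p\le R+c$. The number of nonzero $\mathbf w\in p\Z^m\cap B_{R+c}$ is at most $C\,(R+c)^m/p^m$, with $C$ depending only on $m$. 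This bound holds whenever $p\le R+c$, since then the ball meets $p\Z^m$ in a set of size $\ll (R/p)^m$. Summing over $\mathbf x\in S$ and primes $P<p\le R+c$ gives
\[
|(V_P(S)\setminus V(S))\cap B_R|\le |S|\,C\,(R+c)^m\sum_{p>P}p^{-m}+|S|.
\]
Since $m\ge2$, the series converges, so after dividing by $\vol(B_R)$ the tail is $\ll |S|\sum_{p>P}p^{-m}$ uniformly in $R$.

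\textbf{Conclusion.} Taking $\limsup$ and $\liminf$ as $R\to\infty$, both are squeezed between $\prod_{p\le P}(1-s(p)/p^m)$ and that product minus $O(\sum_{p>P}p^{-m})$. Now let $P\to\infty$. The infinite product converges because $s(p)\le|S|$, so $\sum_p s(p)/p^m<\infty$. Moreover, each factor is nonnegative, since $s(p)\le p^m$. Hence the limit exists and equals $\prod_p(1-s(p)/p^m)$.

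The only delicate point in the tail estimate is uniformity: the bound $\#(p\Z^m\cap B_{R'})\setminus\{\mathbf 0\}\ll (R'/p)^m$ must hold for all $p\le R'$ with a constant independent of $p$. This follows from comparing with the volume of the ball of radius $R'+\sqrt m\,p\le(1+\sqrt m)R'$.
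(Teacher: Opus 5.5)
Your proof is correct. It uses the same sieve skeleton as the paper's proof of Theorem~\ref{thm: main theorem statments}, from which the paper deduces this proposition by taking $K=\mathbb{Q}$. The upper bound comes from $V(S)\subseteq V_P(S)$. The lower bound comes from covering $V_P(S)\setminus V(S)$ by the shifted lattices $\mathbf x+p\mathbb{Z}^m$ with $\mathbf x\in S$, $p>P$ (the paper's inclusion \eqref{eq. non visible from prime larger than pt}), and summing against $\sum_{p>P}p^{-m}$.

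Where you differ is in the tools.

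\textbf{Main term.} The paper realizes the truncated set as an irreducible cut-and-project set with internal space $\prod_{\mathfrak p}(\mathcal O/\mathfrak p)^m$. It then quotes the density formula \eqref{eq: the density formula of irr CPS}, which gives the density along every van Hove sequence at once. Your CRT periodicity modulo $M$, plus counting residue classes in $B_R$, is the same computation done by hand.

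\textbf{Tail.} The paper uses Schmidt's counting theorem (Proposition~\ref{prop: Schmidt counting convex sets}, via Lemma~\ref{lemma on points Ideal in a ball}), cuts the primes off at $CR^n$ using norms, and argues by domination. You instead note that a nonzero vector of $p\mathbb{Z}^m$ has length at least $p$, so only $p\le R+c$ occur. You then use the packing bound $\#\bigl((p\mathbb{Z}^m\cap B_{R'})\setminus\{\mathbf 0\}\bigr)\ll (R'/p)^m$, which is uniform in both $p$ and $R$. This is more elementary. It also avoids a uniformity issue in the paper, where Lemma~\ref{lemma on points Ideal in a ball} is stated only for $R>N(\mathfrak p)^{1/n}$ while the tail sum also runs over primes with larger norm.

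\textbf{Balls versus cubes.} Your direct approach treats Euclidean balls in $\mathbb{R}^m$, as the statement requires. As defined, the averaging sets $B[R,\mathbb{E}]^m$ of the theorem become the cubes $([-R,R]\cap\mathbb{Z})^m$ when $K=\mathbb{Q}$. So the paper's ``direct consequence'' really requires rerunning its argument with $B_R$, which is what you have done.

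One harmless slip: points of $S$ do not lie in $V_P(S)$ once $P\ge 2$, since $\pi_p(\mathbf x)\in\pi_p(S)$ for $\mathbf x\in S$. The extra $+|S|$ in your tail bound is therefore unnecessary.
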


The error term in \eqref{equation on equality in main theorem} if confirmed, would allow the error bound to be used in proving the above proposition. Here, the proof of the proposition is a direct consequence of Theorem \ref{thm: main theorem statments} in the case where $K = \mathbb{Q}$ and $e_i$ represents the coordinate basis.

\subsection{Simultaneous visibility in $\mathcal{O}^m$}\label{Section: visibility in number fields}
Let $K$ be a number field with $n=[K:\mathbb{Q}]$ and $\OO$ be its ring of integers. 
As a generalization to $\OO$, several authors considered visibility in $\mathcal{O}^m$.
There are two major types of formulations.

In the first one, we regard a tuple $(I_1,\ldots, I_m)$ of ideals as a lattice point of $K^m$. 
\begin{definition}\emph{
Let $I_1,\ldots,I_m\subseteq \OO$ be ideals. The greatest comman divisor $\gcd(I_1,\ldots, I_m)$ is defined to be the ideal $\mathcal{A}\subseteq \OO$ which satisfies:
\begin{itemize}
\item $I_i\subseteq \mathcal{A}$ for all $1\leq i\leq m$.
\item If there exists some ideal $\mathcal{B}\subseteq \OO$ such that $I_i\subseteq \mathcal{B}$, then $\mathcal{A}\subseteq \mathcal{B}$.
\end{itemize}
}
\end{definition}
An $m$-tuple $(I_1,\cdots, I_m)$ of ideals is visible from the origin, if $\gcd(I_1,\cdots, I_m)= \OO$. For an ideal $I\in \OO$, the {\em norm} of $I$ is defined as
$$N(I)= \left|\OO/I\right|,$$
which is always finite. 
Let $V_{m,{\rm ideal}}(x)$ denote the number of visible lattice points $(I_1,\cdots, I_m)$ from the origin with $N(I_j)\leq x$. 
Then, B. D. Sittinger \cite{Sittinger}, proved that
\begin{equation}
\begin{split}
V_{m,{\rm ideal}}(x)&= \frac{c^m}{\zeta_K(m)}x^m + \begin{cases}
    O(x^{m-1/n}),&\quad m\geq 3\\
    O(x^{2-1/n} \log x),&\quad m=2,
\end{cases}
\end{split}    
\end{equation}
where
$$\zeta_{K}(s)= \sum_{I\subseteq \OO}\frac{1}{N(I)^s},\qquad \Re(s)>1,$$
and $c$ being a positive constant depending only on $K$.
In \cite{Takeda}, the Lindel\"of Hypothesis for Dedekind zeta function implies that 
\[V_{m,{\rm ideal}}(x)= \frac{c^m}{\zeta_K(m)}x^m +O(x^{m-1/2+\varepsilon}).\]
The M\"obius function $\mu$ is defined for ideals $I\subset \OO$ as follows:
\begin{equation*}
\begin{split}
\mu (I)&= \begin{cases}
    1,&\qquad N(I)=1,\\
    (-1)^r,& \qquad I= \p_1\cdots \p_r,\, \textup{for distinct prime ideals}\, \p_1,\ldots,\p_r,\\
    0, & \qquad \p^2\, |\,  I \, \textup{for some prime ideal}\, \p.
\end{cases}    
\end{split}    
\end{equation*}
Using the M\"obius function, the reciprocal of the Dedekind zeta
function can be expressed as 
\begin{equation}\label{equation of zeta(K, d)}
\frac{1}{\zeta_{K}(s)} = \sum_{I\subset
  \OO}\frac{\mu(I)}{N(I)^s}=\prod_{\p\subset \OO,\, {\rm{prime}}}\left(1-\frac{1}{N(\p)^s}\right),  
\end{equation}
and these series and product converge absolutely when $\Re(s)>1$. 
 
The second formulation, we consider tuples of algebraic integers rather than tuples of ideals. 
Accordingly, we assume that $K$ is a number field whose ring of integers $\mathcal{O}_K=\OO$ is a principal ideal domain.
In this paper, we use this formulation.

Let $\vE=\{e_i\}_{i=1}^n$ be a $\mathbb{Z}$-basis for $\OO$. Define
\begin{equation}\label{eq: sets C[L,E]}
C[L, \vE]=\left\{\sum_{i=1}^n a_i e_i\, :\, a_i\in [-L, L]\cap \Z\right\},\qquad L\in \mathbb{N},    
\end{equation}
and
\begin{equation}\label{eq: sets B[R,E]}
B[R, \vE]=\left\{\sum_{i=1}^n a_i e_i\, :\, a_i\in \Z,\,  a_1^2+\cdots + a_n^2\leq R^2\right\},\qquad R\in \mathbb{R}_{+}.    
\end{equation}

Let $T\subset \OO^m$ , and let $(A_n)_{n\in \mathbb{N}}$ be any sequence of subsets of $\OO$ as defined in \eqref{eq: sets C[L,E]} an \eqref{eq: sets B[R,E]}. We define the upper density and lower density of $T$ with respect to $\vE$ as
$$\overline D_{\vE}(T)= \limsup_{n\to \infty}\frac{|A_n^m\cap T|}{(\vol(A_n))^{m}},\qquad \underline D_{\vE}(T)= \liminf_{n\to \infty}\frac{|A_n^m\cap T|}{(\vol(A_n))^{m}},$$
respectively. 
We say that $T$ has density $d$ with respect to $\vE$ if $\overline D_{\vE}(T)= \underline D_{\vE}(T)$, in which case one sets
$$D_{\vE}(T)=\overline D_{\vE}(T)= \underline D_{\vE}(T)=d.$$
It is easy to see that the density may depend on the chosen basis (see \cite[Example 2.2]{Ferraguti}). Whenever this density is independent of the chosen basis $\vE$, it is consistent to denote the density of the set $T$ by $ D(T)$ without any subscript.

Ferraguti and Micheli proved that the density of the set of coprime  $m$-tuples of algebraic integers is $1/\zeta_K(m)$ \cite[Theorem 3.7]{Ferraguti}. After that, Sittinger developed a further refinement in \cite{Sittinger2}.
Instead of requiring that all $m$ algebraic integers be relatively $r$-prime, Sittinger studied the weaker condition that every subcollection of $j$ elements is relatively $r$-prime.

Let $S\subseteq \OO^m$ be a finite set, we continue to denote $V(S)$ as in the case of $\Z^m$ the set of points of $\OO^m$, visible simultaneously from all points of $S$, i.e.,
$$V(S)= \{{\bf z}\in \OO^m\, :\, \gcd({\bf z-x})= \OO, \forall\,{\bf x}\in S\}.$$
Let $\mathcal{P}_{\OO}$ be the set of all prime ideals in $\OO$. For each prime $\p \in \mathcal{P}_\OO$, denote $\pi_{\p}$ be the the natural projection
$$\pi_{\p}: \OO^m\to \left(\OO/\p\right)^m,$$
and 
$$s(\p)= |\pi_{\p}(S)|.$$

\begin{thm}\label{thm: main theorem statments}
Let $S\subseteq \OO^m$ be a finite set. Then the density of $V(S)$ along an averaging sequence of $C[L,\vE]$ or $B[R,\vE]$ exists, and is given by:
$$ D(V(S))= \prod_{\p\in \mathcal{P}_{\OO}}\left(1-\frac{s(\p)}{N(\p)^m}\right).$$
\end{thm}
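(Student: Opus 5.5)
Identify $\OO$ with $\Z^n$ via the basis $\vE$, so $\OO^m\cong\Z^{mn}$ and the averaging sets $A^m$ (with $A=C[L,\vE]$ or $B[R,\vE]$) become products of cubes or balls in $\vR^{mn}$ of volume $\vol(A)^m$. The plan is a sieve: truncate the set of primes, compute the density of the truncated set exactly via the Chinese remainder theorem, and control the contribution of large primes uniformly in the box.

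\textbf{Step 1 (finite sieve).} For $Y>0$ let $P_Y=\{\p\in\mathcal{P}_\OO: N(\p)\le Y\}$, a finite set, and
\[
V_Y(S)=\{{\bf z}\in\OO^m:\ \pi_\p({\bf z})\notin\pi_\p(S)\ \text{for all } \p\in P_Y\}.
\]
Note ${\bf z}\in V(S)$ iff for every prime $\p$ and every ${\bf x}\in S$, not all $z_i-x_i\in\p$, i.e.\ iff $\pi_\p({\bf z})\notin\pi_\p(S)$ for all $\p$ (here $\gcd=\OO$ iff no prime contains all the $(z_i-x_i)$). Hence $V(S)\subseteq V_Y(S)$. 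With $M=\prod_{\p\in P_Y}\p$, CRT gives $\OO/M\cong\prod_{\p\in P_Y}\OO/\p$, so $V_Y(S)$ is a union of exactly $\prod_{\p\in P_Y}(N(\p)^m-s(\p))$ cosets of the full-rank sublattice $M^m\subseteq\OO^m$, which has index $N(M)^m$. A union of cosets of a full-rank sublattice of $\Z^{mn}$ has density (index-normalized count) along any Jordan-measurable region family expanding to infinity, in particular cubes and balls; hence
\[
D(V_Y(S))=\prod_{\p\in P_Y}\Bigl(1-\frac{s(\p)}{N(\p)^m}\Bigr).
\]
This gives $\overline D(V(S))\le\prod_{N(\p)\le Y}(\cdots)$ for all $Y$, and letting $Y\to\infty$ gives the upper bound. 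The product converges since $s(\p)\le|S|$ and $\sum_\p N(\p)^{-m}<\infty$ for $m\ge2$.

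\textbf{Step 2 (tail estimate, the main obstacle).} It remains to show
\[
\limsup_{L}\frac{|(V_Y(S)\setminus V(S))\cap A^m|}{\vol(A)^m}\to0 \quad\text{as } Y\to\infty.
\]
A point of the difference satisfies ${\bf z}-{\bf x}\in\p^m$ for some ${\bf x}\in S$ and some $\p$ with $N(\p)>Y$. Fix ${\bf x}$ (finitely many choices) and write ${\bf w}={\bf z}-{\bf x}$, which ranges in a slightly shifted box $A'^m$. We split into two cases.

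\emph{Case ${\bf w}=0$.} This contributes at most $|S|$ points, which is negligible.

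\emph{Case ${\bf w}\neq0$ with some coordinate $w_i\neq0$.} Then $\p\mid(w_i)$, so $N(\p)\le|N_{K/\Q}(w_i)|\ll L^n$ since $w_i\in A'$. Also $\p$ is a sublattice of $\OO\cong\Z^n$ of index $N(\p)$. Counting points of $\p$ in a box of side $\asymp L$ gives $\ll L^n/N(\p)+(\text{boundary terms})$; the boundary terms are governed by the successive minima of $\p$. In a PID, $\p=(\pi)$, and $\p$ is well-rounded up to constants depending on $K$: its minima are all $\asymp N(\p)^{1/n}$, because the multiplication-by-units and embedding structure gives $\lambda_1(\p)\gg N(\p)^{1/n}$ via the AM–GM/norm inequality. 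Hence
\[
|\p\cap A'|\ll \frac{L^n}{N(\p)}+1 \quad\text{for } N(\p)\ll L^n.
\]
Then $|\p^m\cap A'^m|\ll L^{nm}/N(\p)^m+L^{n(m-1)}/N(\p)^{m-1}+\cdots+1$. We sum over $Y<N(\p)\ll L^n$, using that the number of prime ideals of norm at most $X$ is $\ll X$. The main term gives $L^{nm}\sum_{N(\p)>Y}N(\p)^{-m}=o_Y(1)L^{nm}$. The term with $k$ factors of $L^n/N(\p)$, for $k\le m-1$, gives $\ll L^{nk}\cdot\#\{\p: N(\p)\ll L^n\}\cdot\max N(\p)^{-k}$, which is at most $L^{nk}\cdot L^{n}$ when $k=0$ or $1$ (with a log when $k=1$ and $m=2$ through $\sum N(\p)^{-1}\ll\log\log L$). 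Since $m\ge2$, every term with $k\le m-1$ is $o(L^{nm})$; in particular $k=m-1$ yields $L^{n(m-1)}\log\log L$. To be safe about zero coordinates, I would alternatively bound by choosing a nonzero coordinate and counting the others freely only when needed; the estimate above already covers this.

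Dividing by $\vol(A)^m\asymp L^{nm}$ shows the tail tends to $0$ as $Y\to\infty$. Combining with Step 1 gives $\underline D\ge\prod$, hence the density exists and equals the stated product. The delicate point is the uniform lattice count $|\p\cap A'|\ll L^n/N(\p)+1$, which needs the lower bound on $\lambda_1(\p)$ (from $|N(\alpha)|\ge N(\p)$ for $0\neq\alpha\in\p$ and AM–GM on the embeddings), followed by Davenport's/Lipschitz counting; this is the same for the boxes $C[L,\vE]$ and the balls $B[R,\vE]$, since both are convex and comparable to each other. Independence of $\vE$ follows because the answer does not depend on $\vE$.
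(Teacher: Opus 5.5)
Your argument is correct and follows the paper's architecture: sieve by finitely many primes and compute that density exactly, obtain the upper bound by letting the truncation grow, and obtain the lower bound by covering $V_Y(S)\setminus V(S)$ with translates $\mathbf{x}+\p^m$ and counting points of $\p$ uniformly in $\p$. The two auxiliary inputs differ.

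\emph{Truncated density.} The paper builds a cut-and-project scheme with finite internal space $\prod_{\p}(\OO/\p)^m$ and applies the density formula for irreducible cut-and-project sets. Your CRT count of cosets of $M^m$ is the same computation without that formalism, and it also works along any van Hove sequence.

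\emph{Tail.} The paper uses Schmidt's counting theorem to get $|\p\cap B[R,\vE]|=\vol(B[R,\vE])/N(\p)+O\bigl((R/N(\p)^{1/n})^{n-1}\bigr)$, valid only for $R>N(\p)^{1/n}$. It then runs a dominated-convergence argument over primes whose underlying rational prime is $\ll R^n$, and delegates the cube case to Ferraguti--Micheli. Your cruder bound $|\p\cap A'|\ll L^n/N(\p)+1$ is uniform in $\p$ and in the translate. Combined with your sharper cutoff $N(\p)\le|N_{K/\mathbb{Q}}(w_i)|\ll L^n$, it covers the whole range at once, including the primes with $N(\p)\ge R^n$ that the paper's counting lemma does not reach, and it treats cubes and balls identically.

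Two tidy-ups. First, the count needs only $\lambda_1(\p)\gg N(\p)^{1/n}$, which holds for every nonzero ideal by the norm inequality; units and the PID hypothesis play no role. A packing argument then gives $|\p\cap A'|\le(1+2\,\mathrm{diam}(A')/\lambda_1(\p))^n$ without using the other successive minima. Second, the cross terms are cleaner via $(a+1)^m\le2^{m-1}(a^m+1)$ and $\#\{\p: N(\p)\le X\}\le nX$. These bound $|(V_Y(S)\setminus V(S))\cap A^m|$ by $\ll_S L^{nm}\sum_{N(\p)>Y}N(\p)^{-m}+L^n$, and $L^n=o(L^{nm})$ since $m\ge2$.
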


\begin{remark}
    As in \cite[Theorem 3.7]{Ferraguti}, the density $ D(V(S))$ is independent of the chosen basis $\vE$.
\end{remark}

Similar to the integer lattice case, we call a finite set $S$ admissible if any two distinct elements
of $S$ are mutually visible. In this case $|S|\leq p^{m}$, where $p=\min\{N(\mathfrak{p}):\mathfrak{p}\subset\OO\text{ is prime ideal}\}$.

\begin{cor}
    Let $S$ be an admissible set of $\OO^m$. Then 
    \[ D(V(S))= \prod_{\p\in \mathcal{P}_{\OO}}\left(1-\frac{|S|}{N(\p)^m}\right).\]
\end{cor}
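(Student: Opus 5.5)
The plan is a local-to-global sieve in the style of Ferraguti--Micheli's proof of \cite[Theorem 3.7]{Ferraguti}: impose finitely many local conditions exactly, and show that the remaining primes contribute negligibly.

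Since $\OO$ is a PID, ${\bf z}\in V(S)$ fails exactly when there are ${\bf x}\in S$ and a prime ideal $\p$ with ${\bf z}\equiv{\bf x}\pmod{\p}$ coordinatewise. Equivalently, $\pi_\p({\bf z})\in\pi_\p(S)$ for some $\p$. Thus
\[
V(S)=\bigcap_{\p}U_\p,\qquad U_\p=\{{\bf z}:\pi_\p({\bf z})\notin\pi_\p(S)\}.
\]

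\textbf{Step 1: exact density for finitely many primes.} For a finite set of primes $\p_1,\dots,\p_k$, the set $\bigcap_{j\le k}U_{\p_j}$ is a union of cosets of the lattice $(\p_1\cdots\p_k)^m\subset\OO^m$. This sublattice has index $\prod N(\p_j)^m$. By the Chinese remainder theorem, the number of admissible residue classes is $\prod_j\bigl(N(\p_j)^m-s(\p_j)\bigr)$.

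Both averaging families $C[L,\vE]^m$ and $B[R,\vE]^m$ are, after identifying $\OO^m\cong\Z^{nm}$ via $\vE$, dilates of a fixed convex body. Hence each coset of a full-rank sublattice $\Lambda$ meets them in $\vol/[\OO^m:\Lambda]+o(\vol)$ points; this is standard lattice-point counting with boundary error. So the set has density exactly $\prod_{j\le k}(1-s(\p_j)/N(\p_j)^m)$, independent of $\vE$.

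Writing $Q_k=\bigcap_{N(\p)\le M}U_\p$, we get $V(S)\subseteq Q_k$. This gives the upper bound $\overline D(V(S))\le\prod_{N(\p)\le M}(1-s(\p)/N(\p)^m)$. Letting $M\to\infty$, the upper density is at most the full product. The product converges because $s(\p)\le|S|$ and $\sum_\p N(\p)^{-m}<\infty$ for $m\ge2$.

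\textbf{Step 2: lower bound.} Here I need
\[
\limsup_{L}\frac{1}{\vol(A_L)^m}\bigl|\{{\bf z}\in A_L^m:\exists\,\p,\ N(\p)>M,\ \pi_\p({\bf z})\in\pi_\p(S)\}\bigr|\longrightarrow0\quad(M\to\infty).
\]
The union bound over ${\bf x}\in S$ reduces this to $|S|$ copies of the same tail statement for a single point ${\bf x}$. Substituting ${\bf w}={\bf z}-{\bf x}$ turns each copy into the tail statement for visibility from the origin, in a translated box. The translated box $A_L^m-{\bf x}$ is contained in $A_{L+c}^m$ for a constant $c$ depending on $S$, since $S$ is finite. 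So each copy is the estimate used by Ferraguti--Micheli for coprime $m$-tuples: the proportion of ${\bf w}$ in the box with all coordinates in some $\p$ with $N(\p)>M$ tends to $0$ as $M\to\infty$.

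I would prove this tail estimate directly and split the primes by norm.

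\emph{Moderate primes.} For $M<N(\p)\le \epsilon L$ (a power of $L$ suited to the shape), count points of $\p^m$ in the box as $O(\vol^m/N(\p)^m)$. This uses that $\p$ contains $N(\p)\OO$, and $N(\p)\OO$ has a fundamental domain of diameter $\ll N(\p)$ relative to a box of side $L$. Summing over these primes gives $\ll\sum_{N(\p)>M}N(\p)^{-m}\to0$.

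\emph{Large primes.} If $N(\p)$ is large compared with $L$, then every nonzero $w_i\in\p$ with $w_i$ in the box satisfies $|N_{K/\Q}(w_i)|\ll L^n$, and $\p\mid(w_i)$. This forces the principal ideal $(w_i)$ to have a divisor of large norm. I would bound this by a divisor-type count: the number of $w$ in the box whose ideal has a prime factor of norm $>T$ is $o(L^n)$ uniformly. Tuples where some $w_i=0$ contribute $O(L^{n(m-1)})$, which is negligible.

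\textbf{Main obstacle.} The hardest part is the uniform estimate for large primes. The lattice $\p^m$ is not a "cube" sublattice, and for large $N(\p)$ the naive count $\vol/N(\p)^m+O(\text{boundary})$ has a boundary term that dominates. I would first try to use the fact that $m\ge2$: requiring two coordinates $w_1,w_2$ to lie in a common large $\p$ is a strong condition. Summing over $\p$ with $N(\p)\in(\epsilon L,CL^n]$ the number of pairs gives roughly $\sum_\p(L^n/N(\p)+1)^2$ over primes of norm $\ll L^n$. The $1$-terms contribute $\pi_K(L^n)\ll L^n/\log L$, which is $o(L^{2n})$; the $(L^n/N(\p))^2$ terms are bounded by $L^{2n}\sum_{N(\p)>\epsilon L}N(\p)^{-2}=o(L^{2n})$. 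If that fails, I would fall back on Ferraguti--Micheli's $p$-adic measure framework, the Ekedahl-type sieve that their Theorem 3.7 rests on, applied to the translated sets.

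Combining the upper and lower bounds gives the stated product, independently of $\vE$.
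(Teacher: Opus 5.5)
\textbf{Main gap.} The one step that is specific to this corollary is missing: you never use that $S$ is admissible. Steps~1 and~2, once completed, give $D(V(S))=\prod_{\p}\bigl(1-s(\p)/N(\p)^m\bigr)$. That is, they re-derive Theorem~\ref{thm: main theorem statments}, which you could simply cite. Your last sentence then asserts that this equals the stated product $\prod_{\p}\bigl(1-|S|/N(\p)^m\bigr)$.

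Without the hypothesis this is false. Take $S=\{{\bf 0},{\bf y}\}$ with ${\bf 0}\neq{\bf y}\in\p^m$: then $s(\p)=1<2=|S|$, and the two products differ.

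What you need is this observation. For distinct ${\bf x},{\bf y}\in S$, the condition $\gcd({\bf x}-{\bf y})=\OO$ means no prime ideal $\p$ contains every coordinate of ${\bf x}-{\bf y}$, i.e.\ $\pi_{\p}({\bf x})\neq\pi_{\p}({\bf y})$. So $\pi_{\p}$ is injective on $S$, and $s(\p)=|S|$ for every $\p$. With this, the corollary is a one-line consequence of Theorem~\ref{thm: main theorem statments}, which is how it follows in the paper.

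\textbf{Self-contained sieve.} If you prefer to keep Step~2, its large-prime part also needs repair.

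Your single-coordinate ``divisor-type'' claim is false for $n\ge 2$ with a threshold $T\asymp L$. A positive proportion of the $w$ in the box (whose norms have size $\asymp L^n$) have a prime ideal factor of norm exceeding $L$. So only your two-coordinate idea can work.

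That idea rests on the uniform bound $|\p\cap A_L|\ll L^n/N(\p)+1$. You use it without proof, and it does not follow from $N(\p)\OO\subseteq\p$ once $N(\p)>L$. It is nevertheless true for ideals. Every nonzero $x\in\p$ has $|N_{K/\mathbb{Q}}(x)|\ge N(\p)$, so by Minkowski's second theorem all successive minima of $\p$ in $\vE$-coordinates are $\asymp N(\p)^{1/n}$. This is the fact underlying Lemmas~\ref{lemma on points Ideal in a ball} and~\ref{lemma on points Ideal in a cube}.

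With that bound, $|\p^m\cap A_L^m|\ll L^{nm}/N(\p)^m+1$. Summing over $M<N(\p)\ll L^n$ gives $\ll L^{nm}\sum_{N(\p)>M}N(\p)^{-m}+O(L^n)$. This is $o(L^{nm})$ as $M\to\infty$ because $m\ge 2$, so the moderate/large split becomes unnecessary.
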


\subsection{Acknowledgements}
The authors are grateful to Daniel Berend for many helpful discussions, and to Faustin Adiceam, Carlos Ospina, Zeev Rudnick, and Barak Wiess for useful comments. 
The first author's work is supported by the  grant
ISF 2860/24.
The second author was supported by Grant-in-Aid for Early-Career Scientists (Grant Number: JP22K13900).
\section{Preliminary}

In this section, we recall the necessary preliminaries for the proofs in Section \ref{sec: proofs}: Van Hove sequences and generalized cut-and-project sets.
\subsection{Van Hove sequences} Let $G$ be a $\sigma$-compact be Locally Compact Abelian groups (LCAG) and $\theta$ be the Haar measure on $G$. 
An \emph{averaging sequence} $(A_{n})^{}_{n\in\mathbb{N}}$ is a sequence of compact sets $A_{n}$ with the property that $A_{n} \subset A_{n+1}$ for every $n\in\mathbb{N}$ and
$\bigcup_{n\in\mathbb{N}} A_{n} = G$. The sequence $(A_{n})^{}_{n\in\mathbb{N}}$ is called \emph{van Hove} if, for every compact $K\subset G$,
\[
     \lim_{n\to\infty} \frac{\theta (\partial^{K}\! A_{n})}
     {\theta (A_{n})} \, = \, 0.
\]
Here, for any compact set $B\subset G$, the $K$-boundary $\partial^{K}\! B$ is defined by
$$\partial^{K}\! B =
\bigl( (K+ B)\setminus \interior{B}\bigr) \cup
\bigl((-K+ B^{\mathsf{c}})\cap B\, \bigr),$$
where
$B^{\mathsf{c}}$ is the complement of $B$ in $G$ and $\interior{B}$ its interior (see \cite[p.125]{Moody1}). For example when $G=\mathbb{R}^d$, then the sequence of sets $A_n= [-n,n]^d$ or $A_n= B_n$, are van-Hove sequence, where $B_n$ is the ball of radius $n$ with center at the origin. The existence of van Hove sequences in
$\sigma$-compact LCAGs is shown in \cite{Martin}.
The density of a point set $\Lambda\subset G$, along the averaging sequences $(A_n)_{n\in \mathbb{N}}$ is defined as:
\[
    D (\Lambda) \, = \lim_{n\to\infty}
    \frac{|\Lambda \cap A_{n}| }{\theta (A_{n})}  ,
\]
provided the limit exists. The lower and upper densities are defined as

\begin{equation}\label{eq:lower-upper-dens}
   \underline{D} (\Lambda) \, = \, \liminf_{n\to\infty}
   \frac{|\Lambda \cap A_{n}| }{\theta(A_{n})}
   \quad \text{and} \quad
   \overline{D} (\Lambda) \, = \, \limsup_{n\to\infty}
   \frac{|\Lambda \cap A_{n}| }{\theta (A_{n})}  ,
\end{equation}
which always exist, with $0\leqslant\underline{D} (\Lambda)
\leqslant\overline{D} (\Lambda)\leqslant\infty$.
It can be easily checked that the sequence sets $C[L,\vE]$ and $B[R,\vE] $ form a van Hove sequence.

\subsection{Generalized cut-and-project sets}
Let $G$ and $H$ be locally compact Abelian groups. Denote their Haar measures by $\theta_G$ and $\theta_H$, respectively. We further assume that $G$ is $\sigma$-compact and $H$ is compactly generated. 


A cut-and-project scheme consists of the product group $ G\times H$ together with a lattice $\mathcal{L}\subset G\times H$ (i.e. a discrete cocompact subgroup) and the projections 
$$\pi_{\textup{phys}}: G\times H\to G,\quad {\rm and} \quad \pi_{\textup{int}}:G\times H\to H,$$
called physical and internal projection, respectively. 
We will assume that $\pi_{\textup{phys}}|_{\mathcal{L}}$ is injective and  $\pi_{\textup{int}}(\mathcal{L})$ is dense in $H$.
A cut-and-project set is any set of the form
$$\Lambda(\mathcal{W},\mathcal{L})= \{\pi_{\textup{phys}}(y)\, : \,
y\in \mathcal{L}, \, \pi_{\textup{int}}(y)\in \mathcal{W}\},$$
with $\mathcal{W}\subset H$. We will assume that $\mathcal{W}$ to be relatively compact. When $\varnothing\neq \mathcal{W}= \overline{\interior{\mathcal{W}}}$ is compact, the window is called proper. When in addition $\theta_{H}(\partial \mathcal{W})=0$, the cut-and-project set is called {\em irreducible}. The density of irreducible cut-and-project set $\Lambda(\mathcal{W},\mathcal{L})$ along a van-Hove sequence $\mathcal{A}=(A_n)_{n\in \mathbb{N}}$ is given by
\begin{equation}\label{eq: the density formula of irr CPS}
D({\Lambda(\mathcal{W},\mathcal{L})})=   \lim_{n\to\infty}
    \frac{|\Lambda(\mathcal{W},\mathcal{L}) \cap A_{n}| }{\theta_G (A_{n})}= D(\mathcal{L})\cdot \theta_H(\mathcal{W}) 
\end{equation}
(see \cite[Theorem 1]{Moody1}). We refer to \cite{Meyer} for more details on cut-and-project sets. 

\section{Proof of Theorem \ref{thm: main theorem statments}}\label{sec: proofs}
Fix a number field $K$ and its ring of integer $\OO_K$. For rational primes $p$, we denote by $M_p=\{\p_{1}^{(p)}, \ldots, \p_{\lambda_p}^{(p)}\}$
the set of distinct prime ideals lying above $p$, where $\lambda_p$ is the total number of distinct prime ideal lying over $\p$.
Denote
$$\vE_{t}= \bigcup_{i=1}^tM_{p_i},\qquad t\in \mathbb{N},$$
where $p_i$ is the $i$-th rational prime.
Let
$$V(S)_{\vE_t}=\{{\bf z}\in \OO^m\, :\, \p\not| \gcd({\bf z-s}),\, \forall \p \in \vE_t,\, \forall {\bf s}\in S\}$$
be the set of points which are simultaneous $\p$-{\it visible} for all $\p\in \vE_t$. In the following lemma, we show that the density of the set $\vE_{t}$ along averaging sets $C[L,\vE]$ (or $B[R,\vE]$) exists and is independent of the chosen basis.

\begin{lemma}\label{lem: for density upto finite primes}
Let $S\subseteq \OO^m$ be a finite set. Then
$$ D(V(S)_{\vE_t})= \prod_{\p\in \vE_t}\left(1-\frac{s(\p)}{N(\p)^m}\right).$$
\end{lemma}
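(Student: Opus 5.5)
Since $\vE_t$ is a finite set of prime ideals, the condition defining $V(S)_{\vE_t}$ is periodic modulo the ideal $I_t=\prod_{\p\in\vE_t}\p$, and $N(I_t)=\prod_{\p\in\vE_t}N(\p)$. The plan is to show that $V(S)_{\vE_t}$ is a finite union of cosets of the full-rank sublattice $I_t^m\subseteq\OO^m$, count residues, and then take the density along van Hove sequences.

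First, membership in $V(S)_{\vE_t}$ depends only on the residue of ${\bf z}$ modulo $\p$ for each $\p\in\vE_t$. Indeed, $\p\nmid\gcd({\bf z}-{\bf s})$ means that not every coordinate $z_i-s_i$ lies in $\p$, i.e.\ $\pi_\p({\bf z})\neq\pi_\p({\bf s})$. Hence, for a fixed $\p$, the admissible residues are exactly $(\OO/\p)^m\setminus\pi_\p(S)$, a set of cardinality $N(\p)^m-s(\p)$.

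Next, the distinct primes in $\vE_t$ are pairwise comaximal, so the Chinese remainder theorem gives
\[
\OO^m/I_t^m\cong\prod_{\p\in\vE_t}(\OO/\p)^m .
\]
It follows that $V(S)_{\vE_t}$ is the preimage of the product set $\prod_{\p}\bigl((\OO/\p)^m\setminus\pi_\p(S)\bigr)$. This preimage is a union of exactly $\prod_{\p\in\vE_t}(N(\p)^m-s(\p))$ cosets of $I_t^m$ inside an index-$N(I_t)^m$ subgroup structure.

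It remains to compute the density of a single coset ${\bf a}+I_t^m$ along $C[L,\vE]^m$ or $B[R,\vE]^m$. Identify $\OO^m$ with $\Z^{nm}$ via the basis $\vE$. Then $I_t^m$ is a sublattice of index $N(I_t)^m$, and the averaging sets are the integer points in the van Hove sequences $[-L,L]^{nm}$ and $\{|{\bf a}|\le R\}$ coordinatewise. By the standard lattice point count in van Hove sets (equivalently, \eqref{eq: the density formula of irr CPS} with trivial internal space, or simply tiling by fundamental domains of $I_t^m$, whose boundary contribution is negligible by the van Hove property), each coset has relative density $N(I_t)^{-m}$ with respect to the counting normalization $|A_n|^m$. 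Summing over the cosets gives
\[
\prod_{\p\in\vE_t}\frac{N(\p)^m-s(\p)}{N(\p)^m}=\prod_{\p\in\vE_t}\left(1-\frac{s(\p)}{N(\p)^m}\right).
\]
This value is independent of $\vE$ because the index of the sublattice does not depend on the chosen basis.

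The only delicate step is the boundary estimate for cosets, which ensures that the limit exists for both families. I would handle it by noting that a fundamental parallelepiped of $I_t^m$ has bounded diameter, so the error is controlled by the number of lattice points in a bounded neighborhood of the boundary, which is $o(|A_n|^m)$ by the van Hove property. Note also that the principal ideal domain hypothesis is not needed for this finite-prime step.
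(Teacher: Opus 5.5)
Your proof is correct, but it takes a more elementary route than the paper.

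The paper realizes $V(S)_{\vE_t}$ as a cut-and-project set with these data:
\begin{itemize}
\item physical space $G=\OO^m$ (discrete);
\item finite internal space $H=\prod_{\p\in\vE_t}(\OO/\p)^m$;
\item lattice the diagonal image of $\OO^m$;
\item window $\mathcal{W}'=\prod_{\p\in\vE_t}\bigl((\OO/\p)^m\setminus\pi_{\p}(S)\bigr)$.
\end{itemize}
It then quotes the Baake--Moody formula $D=D(\mathcal{L}')\,\theta_H(\mathcal{W}')$ for irreducible cut-and-project sets. You instead use the Chinese remainder theorem to write $V(S)_{\vE_t}$ as a union of $\prod_{\p\in\vE_t}(N(\p)^m-s(\p))$ cosets of $I_t^m$. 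You then count each coset directly by tiling with a fundamental domain and using the van Hove property.

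With a finite internal space these are the same mechanism:
\begin{itemize}
\item the kernel of $\OO^m\to H$ is exactly your $I_t^m$;
\item the paper's assertion that $\pi_H(\mathcal{L}')$ is dense (i.e.\ surjective, since $H$ is finite) is precisely your CRT step;
\item $\theta_H(\mathcal{W}')$ is your ratio of admissible residue classes to $N(I_t)^m$.
\end{itemize}

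Your version has three advantages. It is self-contained, and it makes both the period lattice and the basis-independence explicit. It also covers the degenerate case $\pi_{\p}(S)=(\OO/\p)^m$ for some $\p\in\vE_t$, which can occur for an arbitrary finite $S$ when $N(\p)$ is small. There the set is empty and your count correctly gives density $0$. The paper instead asserts that every window factor is nonempty, and the density theorem it quotes is only stated for nonempty (proper) windows.

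The paper's version is shorter and places the lemma in the cut-and-project framework. It gains no extra generality, though, since your tiling argument also works along an arbitrary van Hove sequence.

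One small point of phrasing: $B[R,\vE]^m$ corresponds to a product of $m$ balls in $\Z^n$, not a ball in $\Z^{nm}$. It is still van Hove, so nothing changes.
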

\begin{proof}
Let $G=\OO^m$ (the physical space) equipped with the discrete topology and the counting Haar measure (so the Haar measure of any finite subset $A\subset G$ is $|A|$). Set the internal space
$$H= \prod_{\p\in \vE_t}\left(\OO/\p\right)^m,$$ 
and we equip $H$ with its normalized Haar probability measure $\theta_H$.
Define the diagonal embedding 
$$i: \OO^m\hookrightarrow G\times H,\quad (x_1,\ldots,x_m) \hookrightarrow ((x_1,\ldots,x_m), (x_1\, {\rm mod}\, \p,\ldots,x_m\, {\rm mod}\, \p)_{\p\in \vE_t}).$$
Write $\mathcal{L}'=i(\OO^m)$. Then, $\mathcal{L}'\subset G\times H$ is a lattice. Clearly, $\pi_G|_{\mathcal{L}'}$ is injective and $\pi_H|_{\mathcal{L}'}$ is dense. Thus the standard cut-and-project setup applies to $(G,H,\mathcal{L}')$. The window $\mathcal{W}'\subseteq H$ is defined by removing the residue classes coming from $S$:
$$\mathcal{W}'= \prod_{\p\in \vE_t}\left(\left(\OO/\p\right)^m\setminus\pi_{\p}(S)\right).$$
Each factor $\left(\OO/\p\right)^m\setminus \pi_{\p}(S)$ is a nonempty finite subset of $(\OO\setminus \p)^m$. Therefore, $\mathcal{W}'$ is compact, has nonempty interior (a product of nonempty open sets), and its boundary has Haar measure zero. Hence, $\mathcal{W}'$ is a proper window. By construction, the associated cut-and-project set
$$\Lambda'(\mathcal{W}',\mathcal{L}')= \{\pi_{G}(l)\, :\, l\in \mathcal{L}', \pi_{H}(l)\in \mathcal{W}'\}$$
is irreducible and consists exactly of those ${\bf x}\in \OO^m$ whose reduction modulo every prime $\p\in \vE_t$ avoids the residue classes $\pi_{\p}(S)$; that is,
$$\Lambda'(\mathcal{W}',\mathcal{L}')= V(S)_{\vE_t}.$$
 
Next, compute the Haar measure of the window $\mathcal{W}'$. Because $H$ is the product of the finite factors and $\theta_H$ is the product probability measure,
\begin{equation*}
\begin{split}
\theta_H(\mathcal{W}')&= \prod_{\p\in \vE_t}\frac{|(\OO\setminus \p)^m\setminus \pi_{\p}(S)|}{|\OO\setminus \p|^m}= \prod_{\p\in \vE_t}\left(1-\frac{s(\p)}{N(\p)^m}\right).
\end{split}    
\end{equation*}
It can easily be seen that the density of the lattices $\mathcal{L}'$ is equal to 1 along any van-Hove sequence. 
Since $\Lambda(\mathcal{W}',\mathcal{L}')= V(S)_{\vE_t}$, applying the standard density formula \eqref{eq: the density formula of irr CPS} for irreducible cut-and-project sets, we get
$$D(V(S)_{\vE_t})= D({\Lambda(\mathcal{W}',\mathcal{L}')})= D{(\mathcal{L}')}\cdot \theta_H(\mathcal{W}') = \prod_{\p\in \vE_t}\left(1-\frac{s(\p)}{|N(p)|^m}\right),$$
which completes the proof of the lemma.
\end{proof}

\begin{remark}
\emph{It follows from the proof of the lemma that the density of the set $V(S)_{\vE_t}$ exists along any van Hove sequence in $\OO^m$.}
\end{remark}

\subsection{Proof of Theorem \ref{thm: main theorem statments}}
Since $V(S)\subseteq V(S)_{\vE_t}$, using Lemma \ref{lem: for density upto finite primes}, we get for every natural number $t$,
\begin{equation}\label{eq: upper density of V(S)}
\overline D_{\vE}(V(S))\leq \overline D_{\vE}(V(S)_{\vE_t})=  D(V(S)_{\vE_t})=\prod_{\p\in \vE_t}\left(1-\frac{s(\p)}{|N(p)|^m}\right).    
\end{equation}
By \eqref{eq: upper density of V(S)} and letting $t$ run to infinite gives the inequality
\begin{equation}\label{eq: upper density of V(S) 2}
\overline D_{\vE}(V(S))\leq \prod_{\p\in \mathcal{P}_{\OO}}\left(1-\frac{s(\p)}{N(\p)^m}\right).    
\end{equation}
To get the reverse inequality (so that the two-sided density exists and equals the infinite product), note the following simple estimate
$$ D(V(S)_{\vE_t})-\overline D_{\vE}\left(V(S)_{\vE_t}\setminus V(S)\right)\leq \underline D_{\vE}(V(S)).$$
Therefore, to show the lower density of $V(S)$ approaches the same product as the upper density, it is enough to prove that the upper density $\overline D_{\vE}\left(V(S)_{\vE_t}\setminus V(S)\right)$ tends to zero as $t\to \infty$.

For convenience, we introduce a short notation about primes. For a prime ideal $\p\subset \OO$, for the $t$-th prime number $p_t$ and for an integer $M$, write $\p\succ M$ to mean that $\p$ lies above some rational prime greater than $M$. Dually, write $M\succ \p$ to mean the rational prime lying under $\p$ is less than $M$. With this notation note that if $\p \succ p_t$, then $\p +(p_i)=\OO$ for every $i\leq t$.

We claim that
\begin{equation}\label{eq. non visible from prime larger than pt}
V(S)_{\vE_t}\setminus V(S)\subseteq\bigcup_{{\bf s}\in S}\bigcup_{\p\in \mathcal{P}_{\OO}: \; \p\succ p_t} ({\bf s}+\p^m)\subseteq \OO^m    
\end{equation}
where $\p^m= \{(x_1,\ldots, x_m)\in \OO^m\, :\, x_i\in \p\}$ and ${\bf s} + \p^m= \{{\bf s}+ {\bf u}\,:\, u\in \p^m\}$. In fact, 
for $x\in V(S)_{\vE_t}\setminus V(S)$, we have
$$\forall {\bf s} \in S, \quad {\rm all\, prime\,\,  ideal} \, \p \, {\rm above}\, p_i\, {\rm with}\,  i\leq t,\, \p\not|({\bf x-s})$$
and 
$$\exists\, {\bf s_0}\in S, {\rm and}\, \p\succ p_t\,  {\rm prime\, ideal}, {\rm such\, that }\, \p|({\bf x-s_0}).$$
Combine the two: It says no prime with an underlying rational prime  $p_t$ divides all coordinates of $({\bf x-s_0})$ and there exists some prime $\p$ lying over $q$ and $q>p_t$ such that $\p|({\bf x-s_0})$, so that ${\bf x}\in {\bf s_0}+ \p^m$. Because ${\bf x}$ is arbitrary in $V(S)_{\vE_t}\setminus V(S)$, the claim follows.

Now we divide the proof into two subsections, one for averaging along $B[R,\vE]$ and the other for averaging along $C[L,\vE]$.

\subsection{Averaging along $B[R,\vE]$}
We recall the following proposition from \cite[Proposition 5.2]{Barak2} (See also \cite[Lemma 1]{Schmidt counting convex}, \cite[Theorem 2.3]{Widmer}.):
\begin{prop}\label{prop: Schmidt counting convex sets}  For any $n\in \vN$ there is $C>0$ so that the following holds. 
Let $\mathcal{S} \subset \vR^n$ be a set, let $\mathcal{L} \subset \vR^n$ be a
lattice, and let $c>0$ and $ T_0 \geq 1$ be such that:
\begin{enumerate}
\item \label{item: Schmidt 1}
$\mathcal{S}$ is convex and its diameter is bounded above by $T_0$.
\item \label{item: Schmidt 2}
The lattice $\mathcal{L}$ contains $n$ linearly independent
vectors of length at most $T_0$, and $n-1$ linearly independent vectors of length at most $c$.
\end{enumerate} 
Then
\begin{equation}\label{eq: asymptotic Schmidt count}
\left| |\mathcal{S} \cap \mathcal{L}|  -
\frac{\vol(\mathcal{S})}{\mathrm{covol}(\mathcal{L})} \right| \leq C cT_0^{n-1}.
\end{equation} 
\end{prop}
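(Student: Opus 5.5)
The plan is to slice $\mathcal{L}$ by parallel hyperplanes spanned by its short vectors, count lattice points slice by slice in dimension $n-1$, and then compare the sum of slice counts with $\vol(\mathcal{S})/\mathrm{covol}(\mathcal{L})$ by a one‑dimensional Riemann‑sum estimate. The constant $C$ may depend on $n$ only.

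\textbf{Step 1 (slicing).} Let $v_1,\dots,v_{n-1}\in\mathcal{L}$ be linearly independent with $|v_i|\le c$. Let $U=\mathrm{span}(v_1,\dots,v_{n-1})$ and $\Lambda'=\mathcal{L}\cap U$, which is a primitive $(n-1)$-dimensional sublattice. A standard reduction (the Mahler/Cassels lemma) replaces the $v_i$ by a basis $b_1,\dots,b_{n-1}$ of $\Lambda'$ with $|b_i|\le (n-1)c$. Completing this to a basis of $\mathcal{L}$, the lattice splits into the cosets $\Lambda'+kw$, $k\in\Z$. These lie in parallel affine hyperplanes $U+kw$, spaced at distance $h=\mathrm{covol}(\mathcal{L})/\mathrm{covol}(\Lambda')$.

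\textbf{Step 2 (counting in each slice).} Each slice $\mathcal{S}_k=\mathcal{S}\cap(U+kw)$ is convex in dimension $n-1$ with diameter at most $T_0$. Tile $U+kw$ by translates of the fundamental parallelepiped $F$ of $(b_1,\dots,b_{n-1})$, which has diameter at most $(n-1)^2c$. Then
\[
\left|\,|\mathcal{S}_k\cap(\Lambda'+kw)|-\frac{\vol_{n-1}(\mathcal{S}_k)}{\mathrm{covol}(\Lambda')}\right|
\]
is at most $\mathrm{covol}(\Lambda')^{-1}$ times the volume of the $(n-1)^2c$-neighbourhood of $\partial\mathcal{S}_k$. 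By monotonicity of surface area for convex bodies (compare with a ball of radius $T_0$), this volume is $\ll c\,T_0^{n-2}+c^{n-1}$.

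\textbf{Step 3 (summing slices).} At most $T_0/h+1$ slices meet $\mathcal{S}$, and $h\le T_0$ because $\mathcal{L}$ has $n$ independent vectors of length at most $T_0$. Summing the slice errors gives a total of order $c\,T_0^{n-1}/(h\,\mathrm{covol}(\Lambda'))=c\,T_0^{n-1}/\mathrm{covol}(\mathcal{L})$. It remains to compare $\sum_k f(kh)/\mathrm{covol}(\Lambda')$ with $\int f/(h\,\mathrm{covol}(\Lambda'))=\vol(\mathcal{S})/\mathrm{covol}(\mathcal{L})$, where $f(t)=\vol_{n-1}(\mathcal{S}\cap(U+tw'))$ and $w'$ is the unit normal direction. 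By Brunn–Minkowski, $f^{1/(n-1)}$ is concave, so $f$ is unimodal. The Riemann‑sum error for a unimodal function is at most $2\max f\le 2T_0^{n-1}$. After dividing by $\mathrm{covol}(\Lambda')$, this term must be absorbed into $c\,T_0^{n-1}$.

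\textbf{Main obstacle.} The difficulty is the normalisation in Step 3. Both the term $T_0^{n-1}/\mathrm{covol}(\Lambda')$ and the error of Step 2 are of the right size only when $\mathrm{covol}(\Lambda')$, respectively $\mathrm{covol}(\mathcal{L})$, is bounded below by an absolute constant. This is the situation in the intended application, where $\mathcal{L}$ is a full-rank sublattice of $\OO^m\cong\Z^{nm}$ (so covolumes are at least $1$) and $c\ge1$. Without such a normalisation the estimate can fail: for $\mathcal{L}=\varepsilon\Z^{n-1}\times\Z$ and a thin slab $\mathcal{S}$ containing two slices, the error is of order $T_0^{n-1}/\varepsilon^{n-1}$. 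I would therefore prove the proposition under the implicit hypothesis that $\mathcal{L}\subseteq\Z^n$, or that $\mathrm{covol}(\Lambda')\gg1$. Under that hypothesis:
\begin{itemize}
\item the Riemann-sum term is $\ll T_0^{n-1}\le c\,T_0^{n-1}$, using $c\ge1$, which follows since $\mathcal{L}\subseteq\Z^n$ has no nonzero vector shorter than $1$;
\item the extra $c^{n-1}$ term of Step 2 is dominated, since $c\le T_0$ may be assumed after replacing $c$ by $\min(c,T_0)$.
\end{itemize}
This yields the stated bound with $C$ depending only on $n$.
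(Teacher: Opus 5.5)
The paper does not prove this proposition; it quotes it from \cite{Barak2} (Schmidt's counting lemma). So the only question is whether your argument is sound, and your central observation is correct: as transcribed, the statement is false.

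\begin{itemize}
\item Your example $\mathcal{L}=\varepsilon\Z^{n-1}\times\Z$ satisfies every hypothesis, even with $c=1$, and has error $\asymp T_0^{n-1}\varepsilon^{1-n}$.
\item Normalizing the covolume does not rescue it. For $n=2$, take $\mathcal{L}=T_0^{-1}\Z\times T_0\Z$ (covolume $1$), $c=1$ and $\mathcal{S}=[0,T_0]\times\{0\}$. The error is about $T_0^2$, against the bound $CcT_0$.
\item What is actually needed is a lower bound on $\lambda_1(\mathcal{L})$. This controls both $\mathrm{covol}(\Lambda')$ and $\mathrm{covol}(\mathcal{L})$, exactly as you diagnose.
\end{itemize}

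For the repaired statement your slicing proof is sound. The basis reduction in $\Lambda'$, the Steiner/surface-area bound per slice, the bound $h\le T_0$, and the unimodal Riemann-sum estimate via Brunn's concavity all check out. The hypothesis $\mathcal{L}\subseteq\Z^n$ does give $\mathrm{covol}(\mathcal{L}),\mathrm{covol}(\Lambda')\ge1$, since the Gram determinants are integers. It also gives $c\ge1$, but only for $n\ge2$. For $n=1$ the short-vector condition is vacuous, and $\mathcal{S}=\{0\}\subset\Z$ violates the bound once $c<1/C$.

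Schmidt's standard proof iterates the same slicing through all successive minima and obtains $\ll_n\sum_{j=0}^{n-1}T_0^{j}/(\lambda_1\cdots\lambda_j)$. Your one-step slicing is shorter. However, you should record what Steps 1--3 actually prove before any normalization, namely the scale-invariant estimate
\[
\Bigl|\,|\mathcal{S}\cap\mathcal{L}|-\frac{\vol(\mathcal{S})}{\mathrm{covol}(\mathcal{L})}\Bigr|\ll_n\frac{c\,T_0^{n-1}}{\mathrm{covol}(\mathcal{L})}+\frac{T_0^{n-1}}{\mathrm{covol}(\Lambda')}\qquad(n\ge2,\ c\le T_0),
\]
rather than specializing to $\mathcal{L}\subseteq\Z^n$. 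That specialization is weaker than it looks, for two reasons.

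\begin{itemize}
\item For integral lattices Schmidt's bound already gives $\ll_n T_0^{n-1}$, so $c$ plays no role at all.
\item It does not match how the paper uses the proposition. Lemma~\ref{lemma on points Ideal in a ball} applies it to the rescaled lattice $\mathcal{L}/N(I)^{1/n}$, which is not integral. Applying your version to the unscaled $\alpha_{\vE}^m(I^m)$, where $c\asymp N(I)^{1/n}$, only gives $N(I)^{1/n}R^{mn-1}$.
\end{itemize}

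The displayed estimate, by contrast, gives exactly $(R/N(I)^{1/n})^{mn-1}$ there. This is because $\mathrm{covol}(\mathcal{L})=N(I)^m$ and $\mathrm{covol}(\Lambda')\asymp N(I)^{m-1/n}$, since all successive minima of an ideal lattice are $\asymp N(I)^{1/n}$. Equivalently, you can assume $\lambda_1(\mathcal{L})\ge\delta$ and let $C$ depend on $\delta$, which fits the rescaled lattice.

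Finally, your alternative hypothesis ``$\mathrm{covol}(\Lambda')\gg1$'' is not sufficient on its own. Your own Step 3 bound shows that $\mathrm{covol}(\mathcal{L})$ must be bounded below as well.
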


\begin{lemma}\label{lemma on points Ideal in a ball}
Let $I \subset \OO$ be an ideal. Then for $R>N(I)^{1/n}$, we have
\begin{equation}
\begin{split}
 \left||(I\cap B[R,\vE])^m|- \frac{\vol (B[R,\vE])^m}{N(I)^m}\right|   \leq C_1\left(\frac{R}{N(I)^{1/n}}\right)^{mn-1}
\end{split}    
\end{equation}
for every $R\in \mathbb{R}$, where $N(I)$ denotes the norm of $I$ and the constant $C_1$ is independent of $R$ and $I$.
\end{lemma}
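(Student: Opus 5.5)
The plan is to transport the problem to $\vR^{n}$ via the coordinate map $\phi:\OO\to\Z^n$, $\sum a_ie_i\mapsto(a_1,\ldots,a_n)$. Under $\phi$, the set $B[R,\vE]$ becomes $\Z^n\cap B_R$, where $B_R$ is the closed Euclidean ball of radius $R$. The image $\phi(I)$ is a sublattice $\mathcal{L}_I\subset\Z^n$ of index $N(I)$, so $\mathrm{covol}(\mathcal{L}_I)=N(I)$. Since $(I\cap B[R,\vE])^m$ is a product, it is enough to do the case $m=1$ and then take the $m$-th power. Here, as in the density definition, $\vol(B[R,\vE])$ is read as $\vol(B_R)$.

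\textbf{Step 1: the case $m=1$.} I apply Proposition \ref{prop: Schmidt counting convex sets} with $\mathcal{S}=B_R$, which is convex of diameter $2R$. The difficulty is producing short lattice vectors in $\mathcal{L}_I$. The ideal $I$ contains $N(I)\OO$, so $\mathcal{L}_I\supseteq N(I)\Z^n$, but this only gives vectors of length $N(I)$. That is too long: the target error is $(R/N(I)^{1/n})^{n-1}$, which forces $c$ of order $N(I)^{1/n}$.

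So the key step is to show that $\mathcal{L}_I$ has $n$ linearly independent vectors of length $\ll_K N(I)^{1/n}$, with the implied constant $\kappa$ depending only on $K$ and $\vE$. I would argue this in two parts:
\begin{enumerate}
\item By Minkowski's second theorem, the successive minima satisfy $\lambda_1\cdots\lambda_n\asymp N(I)$.
\item There is a uniform lower bound $\lambda_1(\mathcal{L}_I)\gg N(I)^{1/n}$. For nonzero $x\in I$ we have $N(I)\mid N_{K/\mathbb{Q}}(x)$, hence $N(I)\le|N_{K/\mathbb{Q}}(x)|\ll_K\|\phi(x)\|^n$, because the norm is a homogeneous form of degree $n$ in the coordinates.
\end{enumerate}
Combining the two gives $\lambda_n\ll N(I)/\lambda_1^{n-1}\ll N(I)^{1/n}$. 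I expect this to be the main obstacle, and it is the step where the uniformity in $I$ is won.

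With $\kappa N(I)^{1/n}$ in hand, take $c=\kappa N(I)^{1/n}$ and $T_0=\max(2R,c)$. The hypothesis $R>N(I)^{1/n}$ gives $T_0\ll R$. If $\kappa>2$, one can absorb this by enlarging constants, or handle the range $N(I)^{1/n}<R\le\kappa N(I)^{1/n}$ trivially, since there both the count and the main term are $O(1)$. The proposition then yields
\[
\left||I\cap B[R,\vE]|-\frac{\vol(B_R)}{N(I)}\right|\le C'N(I)^{1/n}R^{n-1}.
\]
This is not yet of the stated form; I need error $\ll (R/N(I)^{1/n})^{n-1}$. To get it, I rescale: apply the proposition to the lattice $N(I)^{-1/n}\mathcal{L}_I$, which has covolume $1$, and the ball of radius $R'=R/N(I)^{1/n}>1$. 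The rescaled lattice has $n$ independent vectors of length $\le\kappa$, so $c=\kappa$ and $T_0\asymp R'$. The error becomes $C\kappa R'^{\,n-1}$, which is exactly the desired bound for $m=1$.

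\textbf{Step 2: passing to the $m$-th power.} Write $X=|I\cap B[R,\vE]|$, $Y=\vol(B_R)/N(I)\asymp R'^{\,n}$, and $E=|X-Y|\ll R'^{\,n-1}$. Then
\[
|X^m-Y^m|\le E\sum_{j=0}^{m-1}X^jY^{m-1-j}\ll R'^{\,n-1}\,(R'^{\,n})^{m-1},
\]
using $X\le Y+E\ll R'^{\,n}$ because $R'>1$. This gives exactly $C_1(R/N(I)^{1/n})^{mn-1}$, with $C_1$ depending only on $K$, $\vE$, $m$ and $n$.
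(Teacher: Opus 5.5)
Your proof is correct and follows essentially the same route as the paper: coordinate embedding, rescaling the ideal lattice to covolume one, and applying the Schmidt-type counting proposition with uniformly bounded short vectors. The only structural difference is that you treat $m=1$ and pass to the $m$-th power via $|X^m-Y^m|\le E\sum_j X^jY^{m-1-j}$, whereas the paper applies the proposition once in $\mathbb{R}^{mn}$ to the product lattice and the product of balls. You also add something the paper lacks: your lower bound $\lambda_1(\mathcal{L}_I)\gg N(I)^{1/n}$, obtained from $N(I)\mid N_{K/\mathbb{Q}}(x)$, is what actually justifies the existence of $n$ independent vectors of length $\ll N(I)^{1/n}$, since Minkowski's second theorem alone, which is all the paper cites, does not give this.
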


\begin{proof} We define the problem in terms of the geometry of numbers by embedding the algebraic integers into the Euclidean space $\mathbb{R}^{mn}$. Let $\alpha_{\vE}: \OO \to \mathbb{Z}^n$
be the coordinate isomorphism mapping an element $x = \sum_{i=1}^n a_i e_i$ to the vector $\mathbf{a} = (a_1, \dots, a_n)$, where $e_i$ are the coordinate basis.
This map extends component-wise to 
$$\alpha_{\vE}^m: \mathcal{O}^m \to \mathbb{R}^{mn}.$$
The image of $I$ under $\alpha_{\vE}$, denoted $\Lambda = \alpha_{\vE}(I)$, is a full-rank sublattice of $\mathbb{Z}^n$.
The index of the ideal corresponds to the determinant of the lattice:
$$[\mathcal{O}: I] = [\mathbb{Z}^n: \Lambda] = \det(\Lambda) = N(I).$$
We are interested in the Cartesian product $I^m$. Let $\mathcal{L} = \alpha_{\vE}^m(I^m) \subset \mathbb{R}^{mn}$. The covolume of this product lattice is:
$$\text{covol}(\mathcal{L}) = (\det \Lambda)^m = N(I)^m.$$
Under the embedding $\alpha_{\mathbb{E}}$, $B[R, \mathbb{E}]$ corresponds to the ellipsoid $\mathcal{S}= \alpha_{\vE}^m(B[R, \mathbb{E}]^m)$ in $\mathbb{R}^{mn}$.
We have
$$|(I \cap B[R, \mathbb{E}])^m|= |\mathcal{L}\cap \mathcal{S}|.$$
To apply the counting argument as in Proposition \ref{prop: Schmidt counting convex sets}, we need to verify conditions 1 and~2. 

Put $a= N(I)^{1/n}$ and consider $\phi: \mathbb{R}^{nm}\to \mathbb{R}^{nm}$ such that $\phi(x)=x/a$. Define 
$$\mathcal{L}'= \phi (\mathcal{L}), \quad {\rm and}\quad \mathcal{S}'= \mathcal{S}/a.$$
Then ${\rm covol}(\mathcal{L}')= {\rm covol}(\mathcal{L})/a^{nm}= N(I)^m/N(I)^m=1$. Also the diameter of $\mathcal{S'}$ is $T_0'= T_0/a$, where $T_0\approx 2R$ was the diameter of $S$. Hence
$$T_0'\asymp \frac{2R}{N(I)^{1/n}}.$$
Counting lattice points is invariant under this scaling: $|\mathcal{L}\cap \mathcal{S}|= |\mathcal{L}'\cap \mathcal{S}'|$. The main term transforms as 
\[\frac{\vol(\mathcal{S})}{N(I)^m}=\frac{\vol(\mathcal{S}')}{{\rm covol}(\mathcal{L}')}= \vol(\mathcal{S}').\]
Proposition \ref{prop: Schmidt counting convex sets} requires a parameter $c$ measuring the size of $mn-1$ short independent vectors of the lattice. Standard results from the geometry of numbers (Minkowski’s second theorem) imply that $\Lambda$ admits $n$ linearly independent vectors whose lengths are $\le C_0 \det(\Lambda)^{1/n} = C_0 N(I)^{1/n}$ for a constant $C_0 = C_0(K)$. Consequently, after scaling by $1/a$, those vectors become bounded by $C_0$ (a constant independent of $I$). Therefore, there exists a uniform constant $C_0$ (depending only on $K$) so that $\mathcal{L}'$ has $mn-1$ independent vectors of length $\le C_0$. In short, the small-vector parameter required by Proposition \ref{prop: Schmidt counting convex sets} is uniformly bounded for $\mathcal{L}'$. By Proposition \ref{prop: Schmidt counting convex sets} 
$$\left||\mathcal L' \cap S'| - \operatorname{vol}(S') \right| \le C_2 \, C_0 \, (T_0')^{mn-1}$$
for some constant $C_2$. By substituting $T_0' \asymp 2R/a = 2R/N(I)^{1/n}$ and absorbing $c$ into the constant, we obtain:
\begin{equation}
 \begin{split}
 \left| |\mathcal L \cap S| - \frac{\operatorname{vol}(S)}{N(I)^m} \right| &= \left| |\mathcal L' \cap S'| - \operatorname{vol}(S') \right|\\
 &\le C_3 \left( \frac{2R}{N(I)^{1/n}} \right)^{mn-1}\\
 &\le C_1 \left( \frac{R}{N(I)^{1/n}} \right)^{mn-1}, 
 \end{split}   
\end{equation}
where $C_1 = C_1(K)$.

\end{proof}

By \eqref{eq. non visible from prime larger than pt}, we obtain, for each $R$
\begin{equation}\label{eq. non visible from prime larger than pt in the box}
(V(S)_{\vE_t}\setminus V(S))\cap B[R, \vE]^m\subseteq\bigcup_{{\bf s}\in S}\bigcup_{\p\in \mathcal{P}: \; CR^n\succ\p\succ p_t} ({\bf s}+ \p^m)\cap B[R, \vE]^m,    
\end{equation}
where $C$ is a positive constant that depends only on the chosen basis and not on $R$.

The restriction $CR^n\succ \p$ arises from the following fact: with respect to a fixed basis $\vE$, the norm of an element of $\OO$ is given by a polynomial of degree $n$ in its coordinate entries. Consequently, every element of $B[R, \vE]$ has norm bounded in absolute value by $CR^n$. Therefore $N(B[R, \vE])\subseteq [-C R^n, C R^n]$. On the other hand, if an element of $B[R, \vE]$ is in $\p$ then its norm is divisible by the rational prime $p$ lying under $\p$. This shows that there cannot exist primes $\p\succ CR^n$ containing a nonzero element of $B[R, \vE]$.

From the preceding inclusion, we obtain the estimate
\begin{equation}\label{eq: for upper bound in complement set in box case}
\begin{split}
\overline {D}_{\vE}(V(S)_{\vE_t}\setminus V(S)) & \leq \limsup_{R\rightarrow \infty}\left|\bigcup_{{\bf s}\in S}\bigcup_{\p\in \mathcal{P}: \; CR^n\succ\p\succ p_t} ({\bf s}+ \p^m)\cap B[R, \vE]^m\right|\cdot{(\vol(B[R,\vE])^{-m})}.
\end{split}
\end{equation}
We claim that for every ${\bf s}$ and $\p$, we have for large $R$
$$|({\bf s} + \p^m)\cap B[R, \vE]^m|\leq c_{\bf s}|( \p\cap B[R, \vE])^m|,$$
where the coefficient $c_{{\bf s}}$ is only depends on the points ${\bf s}$. In fact, for each ${\bf s}\in S$ and prime ideal $\p$, the set ${\bf s +\p^m}$ is a shifted sub-lattice. It follows from Lemma \ref{lemma on points Ideal in a ball} that in one dimension, the number of points of a shifted lattice $s_i +\p$ in a ball $B[R,\vE]$ is
$$|(s_i+\p)\cap  B[R,\vE]|= \frac{\vol(B[R,\vE])}{N(\p)} + O\left(\frac{R}{N(\p)^{1/n}}\right)^{n-1}.$$
For $m$ dimensions, the cardinality is the product of the dimensions:
\begin{equation}
\begin{split}
|({\bf s}+\p^m)\cap B[R,\vE]^m|&= \prod_{i=1}^m\left(\frac{\vol(B[R,\vE])}{N(\p)} + O\left(\frac{R}{N(\p)^{1/n}}\right)^{n-1}\right)\\
&= \frac{\vol(B[R,\vE])^m}{N(\p)^m} + O\left(\frac{R}{N(\p)^{1/n}}\right)^{nm-1}.
\end{split}    
\end{equation}
By choosing a constant $c_s$ to absorb the boundary error for large $R$, we get the claim.
Therefore, we get
\begin{equation}\label{eq: for upper bound in complement set2 box case}
\begin{split}
\overline {D}_{\vE}(V(S)_{\vE_t}\setminus V(S)) 
& \leq |S| C_S\limsup_{R\rightarrow \infty}\sum_{\p\in \mathcal P: \; CR^n\succ \p \succ p_t} \frac{|(\p\cap B[R,\vE])|^{m}}{\vol(B[R,\vE])^{m}},
\end{split}
\end{equation}
where $C_S= \max\{c_{\bf s}\, :\, {\bf s} \in S\}$.
By Lemma~\ref{lemma on points Ideal in a ball}, we have, for $R>N(\p)^{1/n}$,
\[
|(\p\cap B[R,\vE])|
\le \frac{\vol(B[R,\vE])}{N(\p)} + C_1\left(\frac{R}{N(\p)^{1/n}}\right)^{n-1}.
\]
Since $\vol(B[R,\vE])\asymp R^{n}$, dividing by $\vol(B[R,\vE])$ and raising $m$-th power give
\begin{align*}
\frac{|(\p\cap B[R,\vE])|}{\vol(B[R,\vE])}
&\le \frac{1}{N(\p)} + \frac{C'}{R\,N(\p)^{1-1/n}},\\
\frac{|(\p\cap B[R,\vE])|^{m}}{\vol(B[R,\vE])^{m}}
&\le \frac{1}{N(\p)^{m}}+\frac{C''}{R^mN(\p)^{m(1-1/n)}}
\le \frac{C}{N(\p)^{m}}
\end{align*}
for $R>N(\p)^{1/n}$, where $C$ depends only on $\vE$, which can take different value line by line.
Consequently, the summand in \eqref{eq: for upper bound in complement set2 box case} is dominated by $C/N(\p)^m$, independently of $R$.
Since, for each fixed $\p$ we have
\[
\lim_{R\to\infty}\frac{|(\p\cap B[R,\vE])|^{m}}{\vol(B[R,\vE])^{m}}=\frac{1}{N(\p)^m}
\]
and the sum $\sum\frac{1} {N(\p)^m}$ converges for $m\ge 2$, we obtain
\begin{equation}\label{eq:dominate_ratiosum}
\limsup_{R\rightarrow \infty}\sum_{\p\in \mathcal P: \; CR^n\succ \p \succ p_t}
\frac{|(\p\cap B[R,\vE])|^{m}}{\vol(B[R,\vE])^{m}}
\le \sum_{\p\succ p_t}\frac{1}{N(\p)^m}\cdotp
\end{equation}
Combining \eqref{eq: for upper bound in complement set2 box case} and \eqref{eq:dominate_ratiosum},
we get
\begin{equation}\label{eq: for upper bound in complement set2 in terms of N(p)}
\begin{split}
\overline {D}_{\vE}(V(S)_{\vE_t}\setminus V(S)) 
& \leq |S| C_S\sum_{\p \succ p_t}\frac{1}{N(\p)^m}\cdotp
\end{split}
\end{equation}
Using the fact that the sum $\sum\frac{1} {N(\p)^m}$ converges for $m\ge2$, again, we get as $t\to \infty$
$$\overline {D}_{\vE}(V(S)_{\vE_t}\setminus V(S))\to 0.$$

\subsection{Averaging along $C[R,\vE]$}
We first introduce a lemma given by Ferraguti and Micheli.
\begin{lemma}[{\cite[Proposition 3.11]{Ferraguti}}]\label{lemma on points Ideal in a cube}
Let $I\subset \OO$ be an ideal. Then for $R>N(I)^{1/n}$, we have
\begin{equation}
\begin{split}
 \left||(I\cap C[R,\vE])^m|- \frac{\vol( C[R,\vE])^{m}}{N(I)^m}\right|   \leq C_1\left(\frac{2R}{cN(I)^{1/n}}+1\right)^{mn-1},
\end{split}    
\end{equation}
for every $R\in \mathbb{R}$, where the constants $C_1$ and $c$ are independent of $R$ and $I$.
\end{lemma}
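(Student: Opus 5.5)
The plan is to transport the count to a lattice-point problem in $\vR^{mn}$ via the coordinate map $\alpha_{\vE}$ used in the proof of Lemma \ref{lemma on points Ideal in a ball}. Writing $\Lambda=\alpha_{\vE}(I)\subset\Z^n$, the set $I\cap C[R,\vE]$ corresponds to $\Lambda\cap[-R,R]^n$. Hence $|(I\cap C[R,\vE])^m|=|\Lambda\cap[-R,R]^n|^m$, and it suffices to prove the one-dimensional ($m=1$) estimate
$$\Bigl||\Lambda\cap[-R,R]^n|-\frac{(2R)^n}{N(I)}\Bigr|\le C'\Bigl(\frac{2R}{cN(I)^{1/n}}+1\Bigr)^{n-1}$$
and then expand the $m$-th power. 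Up to the harmless discrepancy between $(2R)^n$ and $|C[R,\vE]|=(2\lfloor R\rfloor+1)^n$, which is $O(R^{n-1})$ and is absorbed, this gives the stated bound.

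For the one-dimensional estimate I would tile $\vR^n$ by translates of a fundamental parallelepiped $P$ of $\Lambda$ spanned by a reduced basis $v_1,\ldots,v_n$. The key input is that for ideals the reduced basis is uniformly balanced. For $0\ne x\in I$ we have $|N_{K/\mathbb{Q}}(x)|\ge N(I)$, and the norm is bounded by $C\|\alpha_{\vE}(x)\|^n$. Hence the first successive minimum satisfies $\lambda_1(\Lambda)\ge cN(I)^{1/n}$. Minkowski's second theorem, $\lambda_1\cdots\lambda_n\ll\det\Lambda=N(I)$, then forces $\lambda_i\asymp N(I)^{1/n}$ for every $i$, with constants depending only on $K$ and $\vE$. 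This is the source of the constant $c$ in the statement. Consequently $P$ has diameter $\le D:=C_0N(I)^{1/n}$, and each lattice point of $\Lambda$ corresponds bijectively to one tile of volume $N(I)$.

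Then comes the standard sandwich. Tiles lying fully inside $[-R,R]^n$ are counted in the main term, and tiles meeting the cube all lie in $[-R-D,R+D]^n$. This gives
$$\frac{(2R-2D)^n}{N(I)}\le|\Lambda\cap[-R,R]^n|\le\frac{(2R+2D)^n}{N(I)}.$$
The discrepancy is therefore $\ll D\,(R+D)^{n-1}/N(I)\ll\bigl(R/N(I)^{1/n}+1\bigr)^{n-1}$. Finally, I raise to the $m$-th power. Writing $X=(2R)^n/N(I)$ and $E=O\bigl((2R/(cN(I)^{1/n})+1)^{n-1}\bigr)$, the error $(X+E)^m-X^m$ is a sum of terms $X^{m-j}E^j$. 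Each is $\ll(R/N(I)^{1/n}+1)^{n(m-j)+(n-1)j}\le(\cdots)^{mn-1}$ because $j\ge1$. Under the hypothesis $R>N(I)^{1/n}$, the quantity $X$ is at least a constant.

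The main obstacle is the uniform lower bound $\lambda_1\gg N(I)^{1/n}$. Without it, a skew lattice would give an error of order $R^{n-1}/\lambda_1^{n-1}$ that is not controlled by $N(I)$ alone. The norm argument above handles it, and everything else is routine bookkeeping of constants. Alternatively, one could cite the result of Ferraguti--Micheli directly, since the statement is quoted from there.
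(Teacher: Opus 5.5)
Your proof is correct. It necessarily takes a different route from the paper, because the paper gives no argument of its own. It imports the bound from \cite[Proposition 3.11]{Ferraguti}, which is stated there for open intervals, and only remarks that the closed-cube version ``can be verified by following essentially the same argument.''

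The cited bound has $cN(I)^{1/n}$ in the denominator. That suggests it rests on the same key input you isolate: $\|\alpha_{\vE}(x)\|\ge cN(I)^{1/n}$ for $0\ne x\in I$, obtained from $|N_{K/\mathbb{Q}}(x)|\ge N(I)$. The cited statement is phrased directly in $\vR^{mn}$. You instead settle $m=1$ and recover the exponent $mn-1$ from the binomial expansion. That step is legitimate: with $u=R/N(I)^{1/n}+1\ge 1$, each cross term is $\ll u^{mn-j}\le u^{mn-1}$.

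Your version gives a self-contained proof for the closed cube the paper actually uses, valid for every $R>0$. It also supplies a step missing from the paper's proof of Lemma~\ref{lemma on points Ideal in a ball}. There, the existence of $n$ independent vectors of length $\ll N(I)^{1/n}$ is credited to Minkowski's second theorem alone. But that theorem bounds only the product $\lambda_1\cdots\lambda_n$, so an upper bound on $\lambda_n$ requires a lower bound on $\lambda_1$ such as yours.

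Your tiling sandwich works verbatim for Euclidean balls: tiles based in $B_R$ lie in $B_{R+D}$, and tiles meeting $B_{R-D}$ are based in $B_R$. Your one-factor-at-a-time treatment also fits the product shape of $B[R,\vE]^m$. So the same argument would prove the ball lemma without Proposition~\ref{prop: Schmidt counting convex sets}.

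Two small points should be stated explicitly:
\begin{itemize}
\item Successive-minima vectors need not form a basis of $\Lambda$. You should invoke the standard fact that some basis satisfies $\|v_i\|\ll_n\lambda_i$; this is what bounds the diameter of $P$.
\item For $R\le D$, replace the lower bound $(2R-2D)^n/N(I)$ by $0$. This is harmless, since in that range both the count and the main term are $O(1)$.
\end{itemize}
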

\begin{remark}
In this paper, we define $C[R,\vE]$ using closed intervals,
whereas \cite{Ferraguti} uses open intervals.
Although our lemma is not stated in exactly the same form as
\cite[Proposition 3.11]{Ferraguti},
it can be verified by following essentially the same argument.
\end{remark}
We have $N(C[R, \vE])\subseteq [-C R^n, C R^n]$, where  $C$ is a positive constant that depends only on the chosen basis and not on $R$. 
By \eqref{eq. non visible from prime larger than pt}, we obtain for each $R$
\begin{equation}\label{eq. non visible from prime larger than pt in the cubes}
(V(S)_{\vE_t}\setminus V(S))\cap C[R, \vE]^m\subseteq\bigcup_{{\bf s}\in S}\bigcup_{\p\in \mathcal{P}_{\OO}: \; CR^n\succ\p\succ p_t} ({\bf s}+ \p^m)\cap C[R, \vE]^m.    
\end{equation}
By \eqref{eq. non visible from prime larger than pt in the cubes}, we obtain the estimate
\begin{equation}\label{eq: for upper bound in complement set}
\begin{split}
\overline  D_\vE(V(S)_{\vE_t}\setminus V(S)) & \leq \limsup_{R\rightarrow \infty}\left|\bigcup_{{\bf s}\in S}\bigcup_{\p\in \mathcal{P}_{\OO}: \; CR^n\succ\p\succ p_t} ({\bf s}+ \p^m)\cap C[R, \vE]^m\right|\cdot{(2R)^{-nm}}.
\end{split}
\end{equation}
As in the previous subsection, we apply
Lemma~\ref{lemma on points Ideal in a cube}
to estimate $|({\bf s} + \p^m)\cap C[R, \vE]^m|$. For every ${\bf s}$ and $\p$, we have for sufficiently large $R$
$$|({\bf s} + \p^m)\cap C[R, \vE]^m|\leq c_{\bf s}|( \p\cap C[R, \vE])^m|,$$
where the coefficient $c_{{\bf s}}$ depends only on the points ${\bf s}$. Therefore, we get
\begin{equation}\label{eq: for upper bound in complement set2}
\begin{split}
\overline  D_\vE(V(S)_{\vE_t}\setminus V(S)) 
& \leq |S| C_S\limsup_{R\rightarrow \infty}\sum_{\p\in \mathcal P: \; CR^n\succ \p \succ p_t} |\p\cap C[R, \vE]|^m\cdot{(2R)^{-nm}},
\end{split}
\end{equation}
where $C_S= \max\{c_{\bf s}\, :\, {\bf s} \in S\}$.
The right-hand side of \eqref{eq: for upper bound in complement set2} goes to zero as $t\to \infty$.
This follows from the same tail estimate established in the proof of \cite[Theorem 3.7]{Ferraguti} or the previous subsection.
\bibliographystyle{plain}

\end{document}